\numberwithin{equation}{section}
\numberwithin{figure}{section}
\theoremstyle{plain}
\newtheorem{thm}{\protect\theoremname}[section]
  \theoremstyle{plain}
  \newtheorem{lem}{Lemma}[section]
  \theoremstyle{definition}
  \newtheorem{defn}[thm]{\protect\definitionname}
  \theoremstyle{plain}
  \newtheorem{rem}[thm]{\protect\remarkname}
  \theoremstyle{remark}
   \theoremstyle{plain}
  \newtheorem{hyp}{Hypotheses}[section]
  \theoremstyle{plain}
  \newtheorem{prop}[thm]{\protect\propositionname}
  \theoremstyle{plain}
\def\renewtheorem#1{%
  \expandafter\let\csname#1\endcsname\relax
  \expandafter\let\csname c@#1\endcsname\relax
  \gdef\renewtheorem@envname{#1}
  \renewtheorem@secpar
}
\def\renewtheorem@secpar{\@ifnextchar[{\renewtheorem@numberedlike}{\renewtheorem@nonumberedlike}}
\def\renewtheorem@numberedlike[#1]#2{\newtheorem{\renewtheorem@envname}[#1]{#2}}
\def\renewtheorem@nonumberedlike#1{  
\def\renewtheorem@caption{#1}
\edef\renewtheorem@nowithin{\noexpand\newtheorem{\renewtheorem@envname}{\renewtheorem@caption}}
\renewtheorem@thirdpar
}
\def\renewtheorem@thirdpar{\@ifnextchar[{\renewtheorem@within}{\renewtheorem@nowithin}}
\def\renewtheorem@within[#1]{\renewtheorem@nowithin[#1]}
\DeclareMathOperator{\Tr}{Tr}
  \providecommand{\claimname}{Claim}
  \providecommand{\corollaryname}{Corollary}
  \providecommand{\definitionname}{Definition}
  \providecommand{\propositionname}{Proposition}
  \providecommand{\remarkname}{Remark}
  \providecommand{\theoremname}{Theorem}
\begin{document}
\pagenumbering{arabic}

\global\long\def\alg{\mathscr{F}}
\global\long\def\R{\mathbb{R}}
\global\long\def\N{\mathbb{N}}
\global\long\def\Z{\mathbb{Z}}
\global\long\def\C{\mathbb{C}}
\global\long\def\Q{\mathbb{Q}}
\global\long\def\F{\mathcal{F}}
\global\long\def\L{\mathscr{L}}
\global\long\def\E{\mathbb{E}}
\global\long\def\R{\mathbb{R}}
\global\long\def\H{\mathcal{H}}
\global\long\def\Pr{\mathbb{P}}
\global\long\def\borel{\mathfrak{B}}
\global\long\def\Norm{\mathscr{N}}
\global\long\def\Rd{\mathbb{R}^{d}}
\global\long\def\Hsd{\mathscr{H}}
\global\long\def\I{I_{\delta}}
\global\long\def\u{\mathbbm{1}}
\global\long\def\norm#1{\left\Vert #1\right\Vert }
\global\long\def\scal#1#2{\left\langle #1,#2\right\rangle _{H}}
\global\long\def\bh{\bar{h}}

\title[characterizations of Sobolev spaces]{Characterizations of Sobolev spaces on sublevel sets in abstract Wiener spaces}

\author{Davide Addona, Giorgio Menegatti, Michele Miranda Jr}

\maketitle
\begin{abstract} 
In this paper we consider an abstract Wiener space $(X,\gamma,H)$ and an open subset $O\subseteq X$ which satisfies suitable assumptions. For every $p\in(1,+\infty)$ we define the Sobolev space $W_{0}^{1,p}(O,\gamma)$  as the closure of Lipschitz continuous functions which support with positive distance from $\partial O$ with respect to the natural Sobolev norm, and we show that under the assumptions on $O$ the space $W_{0}^{1,p}(O,\gamma)$ can be characterized as the space of functions in $W^{1,p}(O,\gamma)$ which have null trace at the boundary $\partial O$, or, equivalently, as the space of functions defined on $O$ whose trivial extension belongs to $W^{1,p}(X,\gamma)$.
\end{abstract}

\section{Introduction}

In this paper we consider an abstract Wiener space $(X,\gamma,H)$, i.e., $X$ is a real separable Banach space endowed with a centered non-degenerate Gaussian measure $\gamma$ and $H$ is the associated Cameron-Martin space, and a subset $O\subseteq X$ with $O=G^{-1}((-\infty,0))$, where $G$ is a function which satisfies suitable assumptions (see Hypotheses \ref{claim:regularity}).


The topic of Sobolev spaces $W^{k,p}(X,\gamma)$ in a Wiener space $X$ is well established (see e.g. \cite{Bog}), while Sobolev spaces in subsets of a Wiener space admits different definitions, and they have been treated for example in \cite {Cel}, \cite{Hin} and \cite{Hin4}.  Following \cite{Cel} (see also \cite{Add}), we consider $W^{1,p}(O,\gamma)$ as the domain of closure $\nabla_{H}$ of the $H$-gradient operator on Lipschitz continuous functions.

In \cite{Cel}, the set $O$ is the sublevel $G^{-1}((-\infty,0))$
of a function $G$. Under some regularity assumptions on $G$ it is possible to define a surface measure $\rho$ (Hausdorff-Gauss infinite dimensional measure or Feyel-de La Pradelle measure, see e.g. \cite{Fey}). Moreover, in \cite{Cel} the authors show the existence of a bounded operator (trace operator) $\Tr$ from $W^{1,p}(O,\gamma)$ to $L^{q}(\partial O,\rho)$ with $p>1$ and $q\in[1,p)$. Thanks to this operator, it is possible to introduce an integration-by-parts formula on $O$ which generalize that on the whole space. Namely, for every $\varphi\in W^{1,p}(O,\gamma)$ and every $h\in H$, we have 
\begin{align}
\label{int_by_parts_formula_intro}
\int_{O}(\partial_{h}\varphi-\hat{h}\varphi)\ d\gamma=\int_{\partial O}\Tr\varphi\frac{\scal{\nabla_{H}G}h}{\|\nabla_{H}G\|_{H}}\ d\rho,
\end{align}
with $\partial_{h}\varphi(x)=\left\langle \nabla_{H}\varphi(x),h\right\rangle _{H}$
and $\hat{h}=R_{\gamma}^{-1}(h)$, where $R_{\gamma}$ is the covariance
operator of $\gamma$.  Integration-by-parts formula as \eqref{int_by_parts_formula_intro} on domains have been also proved in \cite{Add,Bon} with different techniques. Strengthening the assumptions on $G$, the trace operator can be extended as an operator from $W^{1,p}(O,\gamma)$ onto $L^p(\partial O,\rho)$ for every $p>1$. Finally, in \cite{Cel} the authors prove that the subspace of $f\in W^{1,p}(O,\gamma)$, consisting of functions with null trace on $\partial O$, coincides with the subspace of $f\in W^{1,p}(O,\gamma)$, whose elements are those functions whose trivial extension to the whole $X$ belongs to $W^{1,p}(X,\gamma)$.

In this paper we consider $O=G^{-1}((-\infty,0))$ and we define the space $W^{1,p}_0(O,\gamma)$ as the closure, with respect to the $W^{1,p}(O,\gamma)$-norm, of Lipschitz continuous functions whose support has positive distance from $O^c$. Eventually we prove that, under suitable conditions on $G$, for every $p>1$, $W_0^{1,p}(O,\gamma)$ is the space of functions in $W^{1,p}(O,\gamma)$ with null trace on $\partial O$ (Theorem \ref{thm:approximation:W_0}).

Examples of spaces $O$ to which our results apply can be found in the Section
\ref{sec:Examples}: they include subgraphs of functions with some regularity,
and subsets of $X$ in the particular case in which $X$ is the
Wiener space which models the Brownian motion or in the case in which $X$ is the Wiener space which models the pinned Brownian motion. 

We stress that these examples may not comprehend many regular sets like balls, neither if $X$ is a Hilbert space. This limitation is strictly related to our approach, and also appears in \cite{Cel}, in the case when the operator ${\rm Tr}$ maps $W^{1,p}(O,\gamma)$ onto $L^p(\partial O,\rho)$ when $p>1$. To the best of our knowledge, nowadays there is no other result about the definition of a trace operator from $W^{1,p}(O,\gamma)$ onto $L^p(\partial O,\rho)$ for more general subsets $O$, and also the case $p=1$ is not reached. This is one of the main gap with respect to the finite dimension, where the theory of traces for Sobolev functions is well-understood and complete. For the case $p=1$, a possible alternative approach is to consider BV functions in open domains in Wiener spaces, which are studied and characterized in \cite{Add2}. However, it is still not clear how to extend the theory of traces for BV functions in finite dimension to this setting. 

Beside traces, another open question in infinite dimension is what domains $O$ allow the construction of extension operators for Sobolev functions. Again, the case when $O$ is an open ball is still an open problem, even when $X$ is a Hilbert space. A negative answer is given in \cite{Bog1}, where the authors provide an example of open convex subset $O$ of a Hilbert space $X$ such that, for every $p>1$, there exists a function $f\in W^{1,p}(O,\gamma)$ which does not admit a Sobolev extension to the whole $X$. On the contrary, an example of extension operator can be found in \cite{Add0}, where the authors show that if $O$ is an half-plane that it admits an extension operator, and explicitly provide such an extension.



\subsection*{Acknowledgements}
G. M.  wants to thank Michael R\"ockner for posing the problem which originated this work and for several important suggestions, and moreover for hosting him to Bielefeld University for a research period. D. A. claims that this research has financially been supported by the Programme ``FIL-Quota Incentivante'' of University of Parma and co-sponsored by Fondazione Cariparma.


\section{Notations and preliminary results}
\label{sec:Setting}


In the following, for any $k,d\in\N$ we denote by $C_{b}^{k}(\R^{d})$
the set of $k$-times differentiable functions on $\Rd$ with all
derivatives uniformly bounded. $C_{b}^{\infty}(\R^{d})$ is the set
of bounded smooth functions on $\Rd$ which belongs to $C_b^k(\Rd)$ for every $k\in\N$. $C_{c}^{\infty}(\R^{d})$ is the
set of functions in $C_{b}^{\infty}(\R^{d})$ with compact support.

For every real-valued function $f$ defined in a subset $A\subseteq X$, we denote by $\overline f$ its trivial extension on $X$, i.e., $\overline f=f$ on $A$ and $\overline f=0$ on $A^c$.

For every $A\subseteq X$, we denote by $\u_A$ the characteristic function of $A$, i.e., $\u_A(x)=1$ if $x\in A$ and $\u_A(x)=0$ if $x\notin A$.

Let $K$ be a real separable Hilbert space. 
and let $\mathcal L(K)$ be the space of linear bounded operators on $K$. 
We denote by $\mathcal L_2(K)$ the subspace of $\mathcal L(K)$ whose elements $L$ satisfy
\begin{align*}
\|L\|_{\mathcal L_2(K)}^2:=\sum_{n=1}^\infty\|Le_n\|_K^2<+\infty,    
\end{align*}
where $\{e_n:n\in\N\}$ is any orthonormal basis of $K$. The elements of $\mathcal L_2(K)$ are called Hilbert-Schmidt operators, and the norm $\|\cdot\|_{\mathcal L_2(K)}$ is the Hilbert-Schmidt norm. The space $(\mathcal L_2(K),\|\cdot\|_{\mathcal L_2(K)})$ is a separable Hilbert space if endowed with the inner product
\begin{align*}
[L,M]_{\mathcal L_2(K)}
= \sum_{n=1}^\infty\langle Le_n,Me_n\rangle_K, \qquad L,M\in \mathcal L_2(K), 
\end{align*}
where $\{e_n:n\in\N\}$ is any orthonormal basis of $K$. 
Given a real separable Banach space $X$, we denote by $\mathcal B(X)$ the Borel subsets of $X$.

We denote by ${\rm Lip}(X)$ the set of Lipschitz continuous functions from $X$ onto $\R$. For every open set $O\subseteq X$ we denote by ${\rm Lip}(O)$ the set of Lipschitz continuous functions on $O$, by ${\rm Lip}_b(O)$ the set of bounded Lipschitz continuous functions on $O$, and by ${\rm Lip}_c(O)$ the set of Lipschitz continuous functions on $O$ whose support has positive distance from $O^c$.
\medskip{}

We recall some definitions and properties of abstract Wiener spaces (see e.g. \cite{Bog}). Let $X$ be a separable Banach space, let $X^{*}$ be its dual and let $X^{**}$ be the dual of $X^{*}$. We will suppose that $\gamma$ is a centered non-degenerate Gaussian measure on $X$. 

We consider the embedding $j:X^{*}\hookrightarrow L^{2}(X,\gamma)$, and we define the reproducing kernel $X_{\gamma}^{*}$ as the closure in $L^{2}(X,\gamma)$ of $j(X^{*})$. It is a separable Hilbert space endowed with the $L^2$-norm, and we introduce the covariance operator $R_{\gamma}:X_{\gamma}^{*}\rightarrow X^{*}{}^{*}$ defined as 
\[
R_{\gamma}f(g)=\int_{X}fj(g) d\gamma, \qquad f\in X_{\gamma}^{*}, \ g\in X^{*}.
\]
$R_{\gamma}$  is injective, and its range is contained in $X$, by identifying $X$ with its natural embedding in $X^{**}$. We define the Cameron-Martin space
$H$ as $R_{\gamma}(X_{\gamma}^{*})\subseteq X$; $H$ inherits a structure of separable Hilbert
space through $R_{\gamma}$: we define $\left\langle \cdot,\cdot\right\rangle {}_{H}$
as the inner product in $H$ and $\left\Vert \cdot\right\Vert _{H}$ as the associated norm. As a subspace of $X$, $H$ is dense.
If $h\in H$, we define $\hat{h}=R_{\gamma}^{-1}(h)$, so that $\hat{h}\in X_{\gamma}^{*}\subseteq L^{2}(X,\gamma)$. The triple $(X,H,\gamma)$ is called abstract Wiener space.

We fix an orthonormal basis $\{h_{i}:i\in\N\}$ of $H$ such that $h_{i}\in R_{\gamma}(X^{*})$ for every $i\in\N$.
We have that $\{\hat{h}_{i}:i\in\N\}$ is an orthonormal basis of $X_{\gamma}^{*}\subseteq L^{2}(X,\gamma)$, and for every $f\in L^{2}(X,\gamma)$ we get
\begin{equation}
    \sum_{i=1}^{+\infty}|\langle f,\hat{h}_{i}\rangle_{L^2(X,\gamma)}|^2\leq\|f\|_{L^2(X,\gamma)}^2. \label{sum}
\end{equation}
For every $n\in\N$ we define the projection $\pi_n:X\rightarrow {\rm span}\{h_1,\ldots,h_n\}\subseteq H$ as
\[
\pi_{n}(x)=\sum_{i=1}^{n}\hat{h}_{i}(x)h_{i}, \qquad x\in X.
\]

We denote by $L^{p}(X,\gamma;H)$ the space of (equivalence classes
of) Bochner integrable functions $f:X\to H$ such that 
\[
\|f\|_{L^{p}(X,\gamma;H)}:=\bigg(\int_{X}\|f\|_{H}^{p}\ d\gamma\bigg)^{1/p}<\infty.
\]
$L^{p}(X,\gamma;H)$ is a Banach space endowed with the norm $\|\cdot\|_{L^{p}(X,\gamma,H)}$
(see e.g. \cite{Die}).

Let $n\in\N$ and let $F$ be a $n$-dimensional subspace of $R_{\gamma}(X^{*})\subseteq H$%
. If $\{e_{i},\ldots,e_{n}\}$ is an orthonormal basis of $F$, we define the projection $\pi_{F}$ of $X$ on $F$ as the bounded linear function
\[
\pi_{F}(x)=\sum_{i=1}^{n}\hat{e}_{i}(x)e_{i}
\]
for every $x\in X$. $\pi_{F}$ is uniquely defined, independently from the choice of the basis.

We denote by $\gamma_{F}$ the image measure $\gamma\circ\pi_{F}^{-1}$
on $F$, i.e.,
\[
\gamma_{F}(A)=\gamma(\pi_{F}^{-1}(A))
\]
for every $A$ Borel set in $F$. It follows that $\gamma_{F}$ is a non-degenerate centered Gaussian measure, and there exist a Banach space $X_{F^{\perp}}$ and a non-degenerate centered Gaussian measure $\gamma_{F^{\perp}}$ such that we have an isometry between $F\times X_{F^{\perp}}$ and $X$, and $\gamma=\gamma_{F}\otimes\gamma_{F}^{\perp}$. This is said factorization of $\gamma$ with respect to $F$.

We define the space of bounded infinitely many times differentiable
cylindrical functions $\mathcal{F}C_{b}^{\infty}(X)$ as the set of
functions $f:X\rightarrow\R$ such that 
\[
f(x)=g(l_{1}(x),\ldots,l_{n}(x)),\qquad x\in X,
\]
where $l_{1},\ldots,l_{n}\in X^*$ are bounded linear functions on $X$ and $g\in C_{b}^{\infty}(\R^{n})$ for some $n\in\N$. We recall that
$\mathcal{F}C_{b}^{\infty}(X)$ is dense in $L^{p}(X,\gamma)$ for every $p\in[1,+\infty)$. $\mathcal{F}C_{b}^{\infty}(X;H)$
denotes the set of functions $f:X\rightarrow H$ with finite dimensional range such that, for every $l\in H$, we have $x\mapsto \langle l, f(x)\rangle_H\in\mathcal{F}C_{b}^{\infty}(X)$. In particular, $\mathcal FC_b^\infty(X;H)$ is spanned by functions $\phi h$ with $\phi\in \mathcal FC_b^\infty(X)$ and $h\in H$. It is easy to prove that $\mathcal{F}C_{b}^{\infty}(X;H)$
is dense in $L^{p}(X,\gamma;H)$.

For every smooth function $f:X\rightarrow \R$, every $h\in H$ and every $x\in X$, we define the partial derivative $\partial_{h}f(x)$ of $f$ at $x$ along $h$
as 
\begin{equation}
\partial_{h}f(x):=\lim_{\varepsilon\rightarrow0}\frac{f(x+\varepsilon h)-f(x)}{\varepsilon}\label{eq:derivative}
\end{equation}
and the partial logarithmic derivative $\partial_{h}^{*}f(x)$ of $f$ at $x$ along $h$ as 
\[
\partial_{h}^{*}f(x):=\partial_{h}f(x)-f(x)\hat{h}(x).
\]
We say that $f$ is $H$-differentiable in $x\in X$ if there exists $\nabla_{H}f(x)\in H$ such that 
\[
\partial_{h}f=\left\langle \nabla_{H}f,h\right\rangle {}_{H}, \qquad h\in H.
\]

If $f\in\mathcal{F}C_{b}^{\infty}(X)$ then it is everywhere $H$-differentiable, $\nabla_Hf=R_\gamma Df$, where $Df$ is the Fr\'echet derivative of $f$, and $\nabla_{H}f\in L^{\infty}(X,\gamma;H)$.
Further, the operator $\nabla_{H}$ is well defined for any Lipschitz continuous function $f$ and $\nabla_{H}f\in L^{\infty}(X,\gamma;H)$ (see e.g.
\cite[Theo. 5.11.2]{Bog}). 

For every $p\in[1,+\infty)$, $\nabla_{H}:\mathcal {F}C_b^\infty(X)\rightarrow L^{p}(X,\gamma;H)$ is a closable operator in $L^{p}(X,\gamma)$. We still denote its closure as $\nabla_H$ and we define the Sobolev space $W^{1,p}(X,\gamma)$ as the domain of this closure (see \cite[Sec. 5.2]{Bog}).
Moreover, if $f\in W^{1,p}(X,\gamma)$, then $\nabla_{H}f\in L^{p}(X,\gamma;H)$ and
for every $h\in H\backslash\{0\}$ we set $\partial_{h}f=\left\langle \nabla_{H}f,h\right\rangle _{H}$. 

Let $f:X\rightarrow H$. We say that $f$ is $H$-differentiable at $x\in X$ if there exists a Hilbert-Schmidt operator $D_{H}f(x)$ on $H$ such that
\[
D_{H}f(x)h=\lim_{\varepsilon\rightarrow0}\frac{f(x+\varepsilon h)-f(x)}{\varepsilon}, \qquad x\in X, \ h\in H.
\]

For every $p\in[1,+\infty)$, $D_{H}:\mathcal {F}C_b^\infty(X;H)\rightarrow L^{p}(X,\gamma;\mathcal L_2(H))$ is a closable operator on $L^{p}(X,\gamma,H)$. We still denote by $D_H$ its closure and we define $W^{1,p}(X,\gamma;H)$ as the domain of
this closure (see \cite[Sec. 5.2]{Bog}).

Let $f:X\rightarrow\R$ be such that $\nabla_{H}$ is defined at each
point $x\in X$. We say that $f$ is twice $H$-differentiable at $x\in X$
if $\nabla_{H}f$ is $H$-differentiable at $x$. We set $D_{H}^{2}f(x):=D_H(\nabla_Hf)(x)$, and we recall that the operator $D^2_Hf(x):H\times H\rightarrow \R$ is a Hilbert-Schmidt operator on $H$. $D_{H}^{2}f(x)$ is said $H$-second
derivative of $f$ at $x$.

The operator $(\nabla_H,D_{H}^{2}):\mathcal{F}C_{b}^{\infty}(X)\rightarrow L^p(X,\gamma;H)\times L^p(X,\gamma;\mathcal L_2(H))$ is a closable operator on $L^p(X,\gamma)$ for every $p\in[1,+\infty)$. We denote by $W^{2,p}(X,\gamma$)
the domain of the closure of the operator $(\nabla_H,D^2_H)$ (see \cite[Sec. 5.2]{Bog}).

We recall the concept of $H$-divergence (see \cite[Sec. 5.8]{Bog}).
For every $f\in\mathcal{F}C_{b}^{\infty}(X;H)$ we define the $H$-divergence
$\mbox{div}{}_{\gamma}$ with respect to $\gamma$ as
\begin{equation}
\mbox{div}{}_{\gamma}f=\sum_{i=1}^{\infty}(\partial_{h_{i}}f_{i}-\hat{h_{i}}f_{i})=\sum_{i=1}^\infty \partial ^*_{h_i}f_i,
\label{eq:divergence}
\end{equation}
where $\{h_{i}:i\in\N\}$ is an orthonormal basis of $H$ and $f_{i}=\left\langle f,h_{i}\right\rangle {}_{H}$ for every $i\in \N$. The definition of $\mbox{div}{}_{\gamma}$ does not depend on the choice of the basis of $H$. Further, if $f:X\rightarrow H$ is everywhere $H$-differentiable
with $D_{H}f$ uniformly bounded, then $\mbox{div}{}_{\gamma}f$ is
defined everywhere (through formula (\ref{eq:divergence})). 

Let $f\in W^{1,2}(X;H)$. For every $n\in\N$ we define $f_{n}(x)=\pi_{n}\circ f(x)$ for every $x\in X$. It follows that the divergence $\mbox{div}{}_{\gamma}f_{n}$ is defined
$\gamma$-a.e. in $X$, it belongs to $L^{2}(X,\gamma)$ and it converges in $L^{2}(X,\gamma)$ to a function $g\in L^2(X,\gamma)$ which we denote by $\mbox{div}{}_{\gamma}f$ (see \cite[Theo.  5.8.3]{Bog}). Moreover, the operator
$\mbox{div}{}_{\gamma}$ is the adjoint of $-\nabla_{H}$ in $L^2(X,\gamma)$ in the sense that, if $f\in W^{1,2}(X;H)$ then 
\begin{eqnarray*}
\int_{X}\left\langle f,\nabla_{H}g\right\rangle {}_{H}\ d\gamma= -\int_{X}\mbox{div}{}_{\gamma}fg\ d\gamma
\end{eqnarray*}
 for every $g\in W^{1,2}(X,\gamma)$. 


A function $f:X\rightarrow \R$ is said to be $H$-Lipschitz continuous if there exists a positive constant $c$ such that for $\gamma$-a.e. $x\in X$ we have
\begin{align*}
|f(x+h)-f(x)|\leq c\|h\|_H, \qquad h\in H.    
\end{align*}
The constant $c$ is called the $H$-Lipschitz constant of $f$, and we denote by ${\rm Lip}_H(X)$ the space of $H$-Lipschitz continuous functions.

Let $f:\Omega\rightarrow X$ be a $H$-Lipschitz continuous function with $H$-Lipschitz constant $c>0$. Then, $f$ is G\^ateaux differentiable $\gamma$-a.e. in $X$, it is $H$-differentiable and it belongs to $W^{1,p}(X,\gamma)$. Moreover, $\nabla_{H}f$ is defined $\gamma$-a.e. in $X$, and 
\[
\|\nabla_{H}f\|_{H}\leq c, \qquad \gamma\textup{-a.e. in } \ X
\]
(see e.g. \cite[Theorem 5.11.2]{Bog}).
\subsection{The Hausdorff-Gauss spherical measure}

In the above setting, by following \cite{Fey}, it is possible to define a Borel measure $\mathcal{S}^{\infty-1}$ on $X$ which replace the $(d-1)$-Hausdorff measure in $\R^{d}$ in abstract Wiener spaces (hence, it can be seen as an area measure for $(\infty-1)$-hypersurfaces). The measure $\mathcal S^{\infty-1}$ is called the Hausdorff-Gauss measure or Feyel-de la Pradelle measure and we denote it by $\rho$. Let us briefly show the construction of $\rho$.

Let $F\subseteq R_{\gamma}(X^{*})$ be an $m$-dimensional subspace of $H$. We identify $F$ with $\R^{m}$ by choosing an orthonormal basis of $F$ in $H$, and by identifying it with the canonical basis of $\R^m$.
For every $m\in\N$, $\mathcal{S}^{m-1}$ denotes the spherical $(m-1)$-dimensional Hausdorff measure on space $F$, and for every $y\in X_{F^{\perp}}$ and every $B\in \mathcal B(X)$, we denote by $B_{y}$ the section
\[
B_{y}=\left\{ z\in F:\ y+z\in B\right\} 
\]
and the function 
\[
G_{m}(y)=(2\pi)^{-\frac{m}{2}}e^{-\frac{\left\Vert y\right\Vert _{H}^{2}}{2}}.
\]

The spherical $(\infty-1)$-dimensional Hausdorff-Gauss measure in $X$
with respect to $F$ is 
\[
{\mathcal{S}}_{F}^{\infty-1}(B)=\int_{X_{F^{\perp}}}\int_{B_{y}}G_{m}(z)\,d{\mathcal{S}}^{m-1}(z)\,d\gamma_{F}^{\perp}(y),\qquad  B\subseteq X.
\]
${\mathcal{S}}_{F}^{\infty-1}$ is a $\sigma$-additive Borel measure on $X$, and for every Borel set $B\in\mathcal B(X)$, the map $y\mapsto\int_{B_{y}}G_{m}\,d{\mathcal{S}}^{m-1}$
is $\gamma^{\perp}$-measurable in $F^{\perp}$. 
Since the measures ${\mathcal{S}}_{F}^{\infty-1}$ are monotone with respect to $F$, we set
\[
\rho(B)=\sup_{F\leq R_\gamma(X^{*})}\mathcal{S}_{F}^{\infty-1}(B)
\]
for every $B\in\mathcal B(X)$, where the supremum is meant as a supremum in a direct set. It turns out that $\rho$ is a Borel measure.

\subsection{Definition of the Sobolev spaces $W^{1,p}(O,\gamma)$ and $W^{1,p}_0(O,\gamma)$\label{sub:w1}}

Let $O\subseteq X$ be an open set. We denote by $\mathcal FC_b^\infty(O)$ the set of the restrictions to $O$ of elements of $\mathcal FC_b^\infty(X)$. The next Lemma is proved, for instance, in \cite[Lem. 2.1]{Add}.
\begin{lem}
\label{lem:Dirichlet_SObolev_O}
For every $p\in[1,+\infty)$, the
operator $\nabla_{H}:\F C_{b}^{\infty}(O)\rightarrow L^{p}(O,\gamma,H)$
is closable in $L^{p}(O,\gamma)$.
The same is true if we use $\mbox{Lip}(O)$
instead of $\F C_{b}^{\infty}(O)$, and the domains of the closures coincide. We still denote by $\nabla_H$ the closure of $\nabla_H$.
\end{lem}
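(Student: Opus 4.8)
The plan is to establish two separate closability statements and then identify the two closures. The lemma asserts, for $p\in[1,+\infty)$, that $\nabla_{H}\colon\F C_{b}^{\infty}(O)\to L^{p}(O,\gamma,H)$ is closable in $L^{p}(O,\gamma)$, that the same holds with ${\rm Lip}(O)$ in place of $\F C_{b}^{\infty}(O)$, and that the two closures have the same domain. Recall that an operator is closable precisely when, whenever $f_{n}\to 0$ in $L^{p}(O,\gamma)$ and $\nabla_{H}f_{n}\to G$ in $L^{p}(O,\gamma,H)$, one must have $G=0$.

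First I would prove closability on $\F C_{b}^{\infty}(O)$. The natural route is to reduce to the known closability of $\nabla_{H}$ on $\F C_{b}^{\infty}(X)$ in $L^{p}(X,\gamma)$, which the excerpt records when defining $W^{1,p}(X,\gamma)$. The key tool is the integration-by-parts formula on the whole space: for $\phi\in\F C_{b}^{\infty}(X)$, $\psi\in\F C_{b}^{\infty}(X)$ and $h\in H$ one has
\begin{equation}
\int_{X}\partial_{h}\phi\,\psi\,d\gamma=-\int_{X}\phi\,\partial_{h}\psi\,d\gamma+\int_{X}\phi\psi\,\hat{h}\,d\gamma.
\label{eq:ibp-plan}
\end{equation}
Given a sequence $f_{n}\in\F C_{b}^{\infty}(O)$ (restrictions of elements of $\F C_{b}^{\infty}(X)$) with $f_{n}\to 0$ in $L^{p}(O,\gamma)$ and $\nabla_{H}f_{n}\to G$ in $L^{p}(O,\gamma,H)$, I would test $\partial_{h}f_{n}=\langle\nabla_{H}f_{n},h\rangle_{H}$ against functions $\psi\in\F C_{b}^{\infty}(X)$ whose support is contained in $O$ (or more carefully, against $\F C_{b}^{\infty}$ functions multiplied by suitable localizing cutoffs supported in $O$), apply \eqref{eq:ibp-plan}, and pass to the limit: the left-hand side converges to $\int_{O}\langle G,h\rangle_{H}\psi\,d\gamma$, while the right-hand side involves only $f_{n}$ times bounded factors and hence tends to $0$. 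This forces $\int_{O}\langle G,h\rangle_{H}\psi\,d\gamma=0$ for all such $\psi$ and all $h\in H$, and since these test functions are dense enough on $O$ one concludes $G=0$ a.e.\ on $O$, giving closability.

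Next I would treat ${\rm Lip}(O)$. The cleanest approach is to approximate: every $f\in{\rm Lip}(O)$ can be extended to an $H$-Lipschitz function on $X$ (as recorded in the excerpt, $H$-Lipschitz functions lie in $W^{1,p}(X,\gamma)$ with $\|\nabla_{H}f\|_{H}\le c$), so the graph of $\nabla_{H}$ on ${\rm Lip}(O)$ is contained in the closure of the graph of $\nabla_{H}$ on $\F C_{b}^{\infty}(O)$; conversely $\F C_{b}^{\infty}(O)\subseteq{\rm Lip}(O)$ trivially. To make the first inclusion precise I would regularize a given $f\in{\rm Lip}(O)$ by finite-dimensional convolutions along the Cameron-Martin directions composed with the projections $\pi_{n}$, producing cylindrical smooth approximants $f^{(n)}$ with $f^{(n)}\to f$ and $\nabla_{H}f^{(n)}\to\nabla_{H}f$ in the respective $L^{p}(O,\gamma)$ and $L^{p}(O,\gamma,H)$ norms; the uniform gradient bound $\|\nabla_{H}f\|_{H}\le c$ provides the dominating function for these limits. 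Closability of $\nabla_{H}$ on ${\rm Lip}(O)$ then follows from closability on $\F C_{b}^{\infty}(O)$, and the coincidence of domains is immediate once both operators are shown to have the same closure: any Lipschitz function is a limit (in graph norm) of cylindrical ones and vice versa.

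The main obstacle I anticipate is the localization in the integration-by-parts step. On the full space $X$ the formula \eqref{eq:ibp-plan} is classical, but here the integrals are over $O$ and there is no boundary term available because $O$ is merely open with no regularity assumed at this stage. The resolution is to restrict to test functions $\psi$ whose support sits at positive distance from $O^{c}$, i.e.\ $\psi\in{\rm Lip}_{c}(O)$ or cylindrical analogues, so that extending $f_{n}\psi$ by zero yields a legitimate function on $X$ and no boundary contribution arises; one then needs that such compactly-supported-in-$O$ test functions separate points of $L^{p'}$ sufficiently to conclude $G=0$. The second technical point is verifying that the finite-dimensional mollification of a Lipschitz function on an \emph{open} set $O$ stays within $O$ where it must, which is handled by combining the cutoffs with the convolution at a small enough scale; this is routine but must be stated carefully to avoid evaluating $f$ outside its domain.
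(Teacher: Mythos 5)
Your proposal is correct and follows essentially the same route as the paper: the paper simply cites \cite{Add} for the closability of both operators (the argument there is exactly your integration by parts against test functions supported at positive distance from $O^c$), and identifies the two closures by the inclusion $\F C_b^\infty(O)\subseteq{\rm Lip}(O)$ together with McShane extension plus approximation by cylindrical functions, as you do. The one imprecision is the claim that the right-hand side of the integration-by-parts identity consists of ``$f_n$ times bounded factors'': $\hat h$ is unbounded, so for $p>1$ you need H\"older with $\psi\hat h\in L^{p'}(X,\gamma)$, and for $p=1$ you must choose test functions $\psi$ (e.g.\ depending on $\hat h$ through a compactly supported factor) for which $\psi\hat h$ is bounded.
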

\begin{proof}
The proof is the same of \cite[Lemma 2.1]{Add} for both  the space functions. The closures coincide because $\mathcal FC_b^\infty(O)
\subseteq {\rm Lip}(O)$, and every Lipschitz continuous function can be extended to $X$ by the McShane 
extension, and then approximated in $L^p$ by $\mathcal F C_b^\infty(X)$ functions.
\end{proof}
Actually, the proof in \cite{Add} uses spaces $\F C_{b}^{1}(O)$ and
$\mbox{Lip}_{b}(O)$, respectively, but the arguments are the same. From Lemma \ref{lem:Dirichlet_SObolev_O} we introduce the following spaces.
\begin{defn}
We denote by $W^{1,p}(O,\gamma)$ the domain of the closure of $\nabla_{H}$ in ${L^p(O,\gamma)}$. If endowed with the norm
\begin{align*}
\|f\|_{W^{1,p}(O,\gamma)}:=\left(\|f\|_{L^p(O,\gamma)}^p+\|\nabla_H f\|_{L^p(O,\gamma;H)}^p\right)^{1/p}, \quad f\in W^{1,p}(O,\gamma),
\end{align*}
the space $W^{1,p}(O,\gamma)$ is a Banach space. If $p=2$ then $W^{1,2}(O,\gamma)$ is a Hilbert space with inner product
\begin{align*}
\langle f,g\rangle_{W^{1,2}(O,\gamma)}=\langle f,g\rangle_{L^2(O,\gamma)}+\langle \nabla_Hf,\nabla_Hg\rangle_{L^2(O,\gamma;H)}, \quad f,g\in W^{1,2}(O,\gamma).
\end{align*}
\end{defn}

We now define the Sobolev spaces $W^{1,p}_0(O,\gamma)$.
\begin{defn}
\label{def:Sobolev_Dir} For every $p\in[1,+\infty)$, we denote by $W_{0}^{1,p}(O,\gamma)$ the closure of ${\rm Lip}_c(O)$ in $W^{1,p}(O,\gamma)$. \end{defn}

We want to prove that $W^{1,p}_0(O)$ actually coincides with the closure of different subspaces of $W^{1,p}(O,\gamma)$. To this aim, we introduce the following spaces of functions.
\begin{defn}
The space ${\rm Lip}_{c,H}(O)$ is the space of functions $f:O\rightarrow \R$, with support contained in an open set $A$ with positive distance from $O^c$, such that there exists a positive constant $\ell$ such that for $\gamma$-a.e. $x\in O$ we have
\begin{align}
\label{loc_H_lip_funct}
|f(x+h)-f(x)|\leq \ell   \|h\|_H, \quad \forall h\in H, \ x+h\in O.
\end{align}

\end{defn}

\begin{defn}
\label{def:C_b_H}With $\mathcal H^{1}(X)$ we denote the set of all continuous functions $f$ (not necessarily bounded) which are $H$-differentiable on $X$ and such that $\nabla_{H}f:X\rightarrow H$
is bounded and continuous.

$\mathcal H_{0}^{1}(O)$ is the subset of $\mathcal H^1(X)$ of functions $f$ whose support has positive distance from $O^c$. 
\end{defn}

The following result shows that $\mathcal H_0^1(O)$ is not empty. To prove this fact, we introduce the Ornstein-Uhlenbeck semigroup $(T_t)_{t\geq0}$ on $X$, characterized by the Mehler formula
\begin{align}
\label{OU_sempigroup}
T_tf(x)=\int_Xf\left(e^{-t}x+\sqrt{1-e^{-2t}}y\right)\gamma(dy),
\qquad f\in C_b(X), \ x\in X, \ t\geq0,
\end{align}
which extends to a bounded strongly continuous semigroup on $L^p(X,\gamma)$ for every $p\in[1,+\infty)$, which we again denote by $(T_t)_{t\geq0}$. We recall that for every $f\in C_b(X)$, we have $T_tf$ is $H$-differentiable and
\begin{align*}
\langle \nabla_HT_tf(x),h\rangle_H
= \frac{e^{-t}}{\sqrt{1-e^{2t}}}\int_Xf\left(e^{-t}x+\sqrt{1-e^{-2t}}y\right)\widehat h(y)\gamma(dy),
\qquad  \ x\in X, \ t\geq0.
\end{align*}

\begin{lem}
\label{lem:spazio_funz_test_prop}
$\mathcal H_{0}^{1}(O)$ is not empty.
\end{lem}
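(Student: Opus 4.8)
The plan is to exhibit a single explicit function in $\mathcal H_0^1(O)$, using the Ornstein--Uhlenbeck semigroup only to upgrade regularity. Since $O$ is a nonempty open set, I would fix $x_0\in O$ and $\epsilon>0$ with $d(x_0)>3\epsilon$, where $d(x):=\inf_{z\in O^c}\norm{x-z}_X$, and start from the Lipschitz, boundedly supported bump
\[
\phi(x):=\max\Big(0,\,1-\tfrac{1}{\epsilon}\norm{x-x_0}_X\Big),\qquad x\in X,
\]
so that $0\le\phi\le1$, $\phi(x_0)=1$, $\phi\in C_b(X)$, and $\mathrm{supp}\,\phi\subseteq \overline{B_X(x_0,\epsilon)}\subseteq\{d\ge 2\epsilon\}$ (because $d$ is $1$-Lipschitz). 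This $\phi$ already has support at positive distance from $O^c$, but it is merely Lipschitz, so its $H$-gradient need not be continuous.

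Next I would set $u:=T_t\phi$ for a small $t>0$ to be chosen. By \eqref{OU_sempigroup} and the gradient formula recalled above, $u$ is continuous, $H$-differentiable, and $\nabla_H u$ is bounded (by $\tfrac{e^{-t}}{\sqrt{1-e^{-2t}}}\norm{\phi}_\infty$) and continuous; hence $u\in\mathcal H^1(X)$, with $0\le u\le1$. The catch is that $T_t$ destroys the localization, so $u$ no longer vanishes near $O^c$. To repair this I would compose with a cut-off: fix $\psi\in C^\infty(\R)$ with bounded derivative, $\psi\equiv0$ on $(-\infty,\tfrac12]$ and $\psi>0$ on $(\tfrac12,+\infty)$, and set $f:=\psi(u)$. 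Then $\nabla_H f=\psi'(u)\nabla_H u$, so $f$ is again continuous and $H$-differentiable with bounded continuous $H$-gradient, i.e. $f\in\mathcal H^1(X)$. Since $T_t\phi(x_0)\to\phi(x_0)=1$ as $t\to0^+$ (dominated convergence in the Mehler formula), for $t$ small we get $u(x_0)>\tfrac12$ and hence $f(x_0)>0$, so $f\not\equiv0$.

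The hard part, and the main obstacle, is to show that for $t$ small $f$ vanishes on an entire neighbourhood of $O^c$, which is exactly what yields $f\in\mathcal H_0^1(O)$. Since $f=0$ wherever $u\le\tfrac12$, it suffices to prove $u\le\tfrac12$ on $N:=\{d<\epsilon\}\supseteq O^c$; then $\mathrm{supp}\,f\subseteq\{d\ge\epsilon\}$ and $\mathrm{dist}(\mathrm{supp}\,f,O^c)\ge\epsilon>0$. For $x\in N$, writing $y$ for a $\gamma$-distributed variable,
\[
u(x)=\int_X \phi\big(e^{-t}x+\sqrt{1-e^{-2t}}\,y\big)\,d\gamma(y)\le \Pr\big(e^{-t}x+\sqrt{1-e^{-2t}}\,y\in \mathrm{supp}\,\phi\big).
\]
The delicate issue is that this bound must be made small uniformly in $x\in N$, including points of arbitrarily large norm when $O^c$ is unbounded, where the contractive drift $e^{-t}x$ competes with the distance estimate. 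I would resolve it by combining two facts: $\mathrm{supp}\,\phi$ is bounded (say contained in $B_X(0,\rho)$), giving $\sqrt{1-e^{-2t}}\,\norm{y}_X\ge e^{-t}\norm{x}_X-\rho$ on the relevant event; and $\mathrm{dist}(x,\mathrm{supp}\,\phi)\ge\epsilon$ for $x\in N$, giving $\sqrt{1-e^{-2t}}\,\norm{y}_X\ge\epsilon-(1-e^{-t})\norm{x}_X$. Taking the maximum and optimizing over $\norm{x}_X$ forces $\norm{y}_X\ge \tau_t$ with $\tau_t:=\big(e^{-t}\epsilon-(1-e^{-t})\rho\big)/\sqrt{1-e^{-2t}}\to+\infty$ as $t\to0^+$. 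By Fernique's theorem $\Pr(\norm{y}_X\ge\tau_t)\to0$, so a sufficiently small $t$ gives $u\le\tfrac12$ on $N$, completing the construction and showing $\mathcal H_0^1(O)\neq\varnothing$.
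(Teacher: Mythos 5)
Your proposal is correct, and it is built on the same basic device as the paper's proof: a smooth scalar cut-off composed with the Ornstein--Uhlenbeck regularization of a compactly localized Lipschitz function, i.e.\ a function of the form $\Phi(T_t f)$ with $f$ Lipschitz, whose $H$-gradient is then shown to be bounded and continuous via the Mehler gradient formula and the Bessel inequality \eqref{sum}. The difference is in how the localization is obtained. The paper outsources the key step to \cite[Lemma 2.5]{Su98}, which directly supplies $t>0$, $f$ and $\Phi$ such that $\Phi(T_tf)$ equals $1$ on a prescribed bounded closed set $B$ and vanishes outside its $\varepsilon$-neighbourhood; the verification in the paper is then only the continuity of the gradient. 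You instead prove the localization from scratch: starting from the explicit bump $\phi(x)=\max(0,1-\epsilon^{-1}\|x-x_0\|_X)$, you bound $T_t\phi(x)$ on $\{d<\epsilon\}$ by the Gaussian probability that $e^{-t}x+\sqrt{1-e^{-2t}}\,y$ lands in ${\rm supp}\,\phi$, and you correctly identify and resolve the one genuinely delicate point, namely uniformity in $x$ over the (possibly unbounded) neighbourhood of $O^c$: combining the two lower bounds $\sqrt{1-e^{-2t}}\,\|y\|_X\geq e^{-t}\|x\|_X-\rho$ and $\sqrt{1-e^{-2t}}\,\|y\|_X\geq \epsilon-(1-e^{-t})\|x\|_X$ and minimizing over $\|x\|_X$ gives a threshold $\tau_t\to+\infty$, whence a uniform tail bound (Fernique is more than needed here; tightness of $\gamma$ already suffices). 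What your route buys is a self-contained proof that does not rely on Sugita's lemma; what the paper's route buys is brevity, plus the additional information that the constructed function equals $1$ on the prescribed set $B$, a feature that is actually used later in the proof of Lemma \ref{lem:dens_H_funct_sob_dir} (where $F$ multiplies approximating cylindrical functions), so your weaker conclusion ``$f\not\equiv 0$'' suffices for the nonemptiness statement but not for that later application. A minor point: the continuity of $\nabla_H(T_t\phi)$ is asserted rather than proved in your write-up; it requires the same short computation with \eqref{sum} and the Lipschitz continuity of $\phi$ that the paper carries out.
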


\begin{proof}
Let us fix a bounded closed set $B\subseteq X$ and $\varepsilon>0$. From \cite[Lemma 2.5]{Su98} and its proof, we infer that there exists a Lipschitz continuous function $f$ with Lipschitz constant $2\varepsilon^{-1}$, $t>0$ and a smooth function $\Phi\in C_b^\infty(\R)$ with $0\leq \Phi\leq 1$, $\Phi=0$ on $(-\infty,1/3)$ and $\Phi=1$ on $(2/3,+\infty)$, such that the function $F_{B,\varepsilon}=\Phi(T_tf)$ equals $1$ on $B$ and $F_{B,\varepsilon}=0$ on $X\setminus\{x\in X:d(x,B)>\varepsilon\}$.



Hence, for every bounded open set $ A\subseteq O$ with positive distance $d$ from $O^c$, with the choice $B=\overline A$ and $\varepsilon<d$, the function $F=F_{B,\varepsilon}$ has $\nabla_HF$ everywhere defined and bounded by \cite[Theorem 5.11.2]{Bog}. $F$ belongs to $\mathcal H_0^1(O)$, providing that we prove that $\nabla_HF$ is continuous. To this aim, for every $x,y\in X$ we have
\begin{align*}
& \|\nabla _HF(x)-\nabla_HF(y)\|_H^2 \\
\leq  & \frac{e^{-2t}\|\Phi\|_{C^1_b(\R)}^2}{1-e^{-2t}}\sum_{n=1}^\infty\left(\int_X\left(f(e^{-t}x+\sqrt{1-e^{-2t}}z)-f(e^{-t}y+\sqrt{1-e^{-2t}}z)\right)\hat h_n(z)\gamma(dz)\right)^2 \\
\leq &  \frac{e^{-2t}\|\Phi\|_{C^1_b(\R)}^2}{1-e^{-2t}} \|f(e^{-t}x+\sqrt{1-e^{-2t}}\cdot)-f(e^{-t}y+\sqrt{1-e^{-2t}}\cdot)\|_{L^2(X,\gamma)}^2 \\
\leq &  \frac{4e^{-4t}\|\Phi\|_{C^1_b(\R)}^2}{\varepsilon^2(1-e^{-2t})}\|x-y\|_X^2,
\end{align*}
where the second inequality is a consequence of \eqref{sum}, and this gives the continuity of $\nabla_HF$. Further, $F$ is Lipschitz continuous due to the Lipschitz continuity of $f$, the definition of $T_t$ and the smoothness of $\Phi$. 
\end{proof}



From the definition of $\mathcal H^1_0(O)$, it follows that $\mathcal H^1_0(O)\subseteq {\rm Lip}_{c,H}(O)$.
Further, for every $f\in{\rm Lip}_{c,H}(O)$, its trivial extension $\overline f$ belongs to $W^{1,p}(X,\gamma)$ for every $p\in(1,+\infty)$, and so ${\rm Lip}_{c,H}(O)\subseteq W^{1,p}(O,\gamma)$ for every $p\in(1,+\infty)$ (see \cite[Theorem 5.11.2]{Bog}).

Clearly, also ${\rm Lip}_c(O)\subseteq {\rm Lip}_{c,H}(O)$, and so \begin{align*}
W^{1,p}_0(O,\gamma)\subseteq\overline{{\rm Lip}_{c,H}(O)}^{W^{1,p}(O,\gamma)}, \qquad \overline{{\mathcal H}_{0}^1(O)}^{W^{1,p}(O,\gamma)}\subseteq\overline{{\rm Lip}_{c,H}(O)}^{W^{1,p}(O,\gamma)}.    
\end{align*}
We prove that the above inclusions are indeed equalities.
\begin{lem}
\label{lem:dens_H_funct_sob_dir}
We have
\begin{align*}
W^{1,p}_0(O,\gamma)=\overline{{\mathcal H}_{0}^1(O)}^{W^{1,p}(O,\gamma)}=
\overline{{\rm Lip}_{c,H}(O)}^{W^{1,p}(O,\gamma)},    
\end{align*}
 The closure of $\mathcal H_0^1(O)$ in $W^{1,p}(O,\gamma)$ coincides with $W^{1,p}_0(O,\gamma)$ 
for every $p\in(1,+\infty)$.
\end{lem}
\begin{proof}
Let us fix $p\in(1,+\infty)$. To prove the statement, we show that $\overline{{\mathcal H}_{0}^1(O)}^{W^{1,p}(O,\gamma)}\subseteq W^{1,p}_0(O,\gamma)$ and that $\overline{{\rm Lip}_{c,H}(O)}^{W^{1,p}(O,\gamma)}\subseteq \overline{{\mathcal H}_{0}^1(O)}^{W^{1,p}(O,\gamma)}$. Without loss of generality, we assume that the support of the considered functions is bounded.

\vspace{2mm}

Let $g\in \mathcal H^1_0(O)$. Its trivial extension $\overline g$ belongs to $ W^{1,p}(X,\gamma)$ and so there exists a sequence $(g_n)_{n\in\N}\subseteq \mathcal{FC}_b^\infty(X)$ which converges to $\overline g$ in $W^{1,p}(X,\gamma)$ as $n\rightarrow +\infty$. Let $A\subseteq O$ be a bounded open set with positive distance from $O^c$ such that ${\rm supp}(g)\subseteq A$, and let $F$ be the function defined in Lemma \ref{lem:spazio_funz_test_prop}. $F$ is Lipschitz continuous: indeed, for every $x,y\in X$ we have
\begin{align*}
|F(x)-F(y)|
\leq & \|\Phi'\|_\infty|T_tf(x)-T_tf(y)| \\
\leq & \|\Phi'\|_\infty\int_X\left|f(e^{-t}x+\sqrt{1-e^{2t}}z)-f(e^{-t}y+\sqrt{1-e^{2t}}z)\right|\gamma(dz) \\
\leq & \frac{2}{\varepsilon}e^{-t}\|\Phi'\|_\infty|x-y|_X,
\end{align*}
where we have used the fact that $f$ is a $\frac{2}{\varepsilon}$-Lipschitz continuous function.
Then, the sequence $(F g_n)_{n\in\N}\subseteq \mathcal {\rm Lip}_c(O)$ and it converges to $g$ in $W^{1,p}(O,\gamma)$. 

This implies that $\overline{{\mathcal H}_{0}^1(O)}^{W^{1,p}(O,\gamma)}\subseteq W^{1,p}_0(O,\gamma)$.


\vspace{2mm}
Let $g\in {\rm Lip}_c(O)$. Its trivial extension $\overline g$ belongs to $ W^{1,p}(X,\gamma)$. Hence, there exists a sequence $(g_n)_{n\in\N}\subseteq \mathcal {FC}_b^\infty(X)$ such that $g_n\rightarrow \overline g$ in $W^{1,p}(X,\gamma)$ as $n\rightarrow+\infty$. Let $A\subseteq O$ be a bounded open set with positive distance from $O^c$ such that ${\rm supp}(g)\subseteq A$, and let $F$ be the function defined in Lemma \ref{lem:spazio_funz_test_prop}.  The sequence $(F g_n)_{n\in\N}$ converges to $g$ in $W^{1,p}(O,\gamma)$, with $F\in \mathcal H_0^1(O)$ and $g_n\in \mathcal {FC}_b^\infty(X)$, which give $F g_n \in \mathcal H_0^1(O)$ for every $n\in\N$.

This gives $\overline{{\rm Lip}_{c,H}(O)}^{W^{1,p}(O)}\subseteq \overline{{\mathcal H}_{0}^1(O)}^{W^{1,p}(O)}$.
\end{proof}


By the operator theory, there exists a unique unbounded operator $L_O$, with dense domain in $W_{0}^{1,2}(O,\gamma)$, such that, for every $f\in D(L_O)$ and $g\in W_{0}^{1,2}(O,\gamma)$, we get
\[
\int_{O}L_Of\cdot g\ d\gamma=-\int_{O}\left\langle \nabla_{H}f,\nabla_{H}g\right\rangle _{H}\ d\gamma.
\]

\begin{defn}
The operator $L_O:D(L_O)\rightarrow L^2(O,\gamma)$ is called Ornstein-Uhlenbeck operator on $O$ with homogeneous Dirichlet boundary conditions. 

When $O=X$, we denote $L_X$ by $L$, and it is the infinitesimal generator of the Ornstein-Uhlenbeck semigroup $(T_{t})_{t\geq0}$.
\end{defn}


\section{Traces in regular sets}
In the following, for every $\delta>0$ we denote by $I_{\delta}$ the real interval $(-\delta,\delta)\subseteq\R$.


Inspired by \cite[Hypothesis 3.1]{Cel}, we state our assumptions on $O$.

\begin{hyp}
\label{claim:regularity} 
Let $G:X\rightarrow \R$ and $\delta>0$ satisfy:
\begin{enumerate}[\rm(i)]
\item
$G$ is a continuous function which belongs to ${\rm Lip}_{H}(X)$;
\item $G\in W^{2,p}(X,\gamma)$ for some $p>1$ and ${\rm esssup}_{X}\|D^2_HG\|_{\mathcal L_2(H)}<+\infty$;
\item $\|\nabla_{H}G\|_{H}^{-1}\in L^{\infty}(X)$;
\item $LG\in L^\infty(G^{-1}(I_{\delta}))$.
\end{enumerate}
Hereafter, we set $O:=G^{-1}((-\infty,0))$ and we assume that $O$ and $\partial O$ are not the empty set. 
\end{hyp}
\begin{rem}
\label{rem:G_boundary}
Let us comment the above assumptions.
\begin{enumerate}
\item[i)] $O$ is an open set and $\partial O=G^{-1}(\{0\})$.
Hence, $\gamma(O)>0$ since every open set has positive measure by an immediate consequence of \cite[Prop. 2.4.10]{Bog}.
\item[ii)] From Hypothesis \ref{claim:regularity}$(ii)$ we infer that $G\in W^{2,q}(X,\gamma)$
for all $q>1$, and so \cite[Hypothesis 3.1]{Cel} is fulfilled.
\item[iii)] By the points (ii) and (iii) of the Hypotheses \ref{claim:regularity} it
follows that 
\begin{align*}
\frac{\nabla_{H}G}{\|\nabla_{H}G\|_{H}}\in W^{1,2}(X;H).    
\end{align*}
By adding the point (iv) we have also that ${\rm div}_{\gamma}\left(\frac{\nabla_{H}G}{\|\nabla_{H}G\|_{H}}\right)\in L^{\infty}(G^{-1}(\I))$, since
\begin{align*}
\mbox{\rm div}_{\gamma}\left(\frac{\nabla_{H}G}{\|\nabla_{H}G\|_{H}}\right)=\frac{ LG}{\|\nabla_{H}G\|_{H}}-\frac{\left\langle D_{H}^{2}G(\nabla_{H}G),\nabla_{H}G\right\rangle _{H}}{\|\nabla_{H}G\|_{H}^{3}}    \end{align*}

\item[iv)] The Hypothesis (iii) is very restrictive, for example it
is not satisfied by $\|\cdot\|_{X}^{2}$ when $X$ is a Hilbert space, which would allow to consider balls. 

\item[v)] For $-\delta<\varepsilon<\delta$ the assumption remains
true if we replace $G$ with $G+\varepsilon$ or with $-G+\varepsilon$, with the value $\delta$ replaced by $\delta'=\delta-|\varepsilon|$.

\item[vi)] From Hypotheses \ref{claim:regularity} it follows that $LG$, and so ${\rm div}_\gamma(\nabla_HG/\|\nabla_HG\|_H)$, belongs to $ L^{p}(X,\gamma)$
for every $p\in(1,+\infty)$.

\end{enumerate}
\end{rem}

In the sequel, we will need the Sobolev regularity of the modulus of elements of $W^{1,p}(X)$, which is proved in the following lemma.

\begin{lem}
\label{prop:mod_Sobolev_reg}
Let $u\in W^{1,p}(X,\gamma)$ with $p>1$. Then, for every $q\in(1,p)$, the function $|u|^{q}$ belongs to $W^{1,p/q}(X,\gamma)$, and $\nabla_H|u|^q=q\ \!{\rm sgn}(u)|u|^{q-1}\nabla_Hu$.
\end{lem}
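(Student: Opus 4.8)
The plan is to prove the statement by approximation, reducing everything to the one-dimensional chain rule and the density of smooth cylindrical functions. Let $u\in W^{1,p}(X,\gamma)$ and fix $q\in(1,p)$. The target is to show $|u|^q\in W^{1,p/q}(X,\gamma)$ together with the formula $\nabla_H|u|^q=q\,{\rm sgn}(u)|u|^{q-1}\nabla_Hu$. First I would check that the proposed gradient is indeed in the right space: since $u\in L^p$, Hölder's inequality (with exponents $p/q$ and its conjugate) gives $|u|^{q-1}\|\nabla_H u\|_H\in L^{p/q}$, because $\int (|u|^{q-1}\|\nabla_H u\|_H)^{p/q}\,d\gamma$ splits via the exponents $\tfrac{p}{q-1}\cdot\tfrac{q}{p}$ on the first factor and $p/q$ on $\|\nabla_H u\|_H$, both integrable. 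This confirms the candidate is a legitimate element of $L^{p/q}(X,\gamma;H)$, and a parallel estimate shows $|u|^q\in L^{p/q}(X,\gamma)$.

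The core of the argument is a regularization of the absolute value to avoid the non-differentiability at the origin. I would introduce $\Phi_\varepsilon(t)=(t^2+\varepsilon^2)^{q/2}-\varepsilon^q$, a smooth function with bounded derivative on compacts, satisfying $\Phi_\varepsilon'(t)=q\,t(t^2+\varepsilon^2)^{q/2-1}$, and note $\Phi_\varepsilon(t)\to|t|^q$ and $\Phi_\varepsilon'(t)\to q\,{\rm sgn}(t)|t|^{q-1}$ pointwise as $\varepsilon\to0$. For $u$ replaced by a cylindrical approximant, the chain rule for $\nabla_H$ applies directly, giving $\nabla_H\Phi_\varepsilon(u)=\Phi_\varepsilon'(u)\nabla_H u$. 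The scheme is then a double approximation: take $u_n\in\mathcal FC_b^\infty(X)$ with $u_n\to u$ in $W^{1,p}$, establish the identity for $\Phi_\varepsilon(u_n)$, and then pass to the limit first in $n$ (using closability of $\nabla_H$ and the Lipschitz-type control on $\Phi_\varepsilon$, since $|\Phi_\varepsilon'|\le q|t|^{q-1}$ grows sublinearly relative to the $L^p$ control) and then in $\varepsilon$. I would verify $\Phi_\varepsilon(u)\to|u|^q$ in $L^{p/q}$ by dominated convergence and $\Phi_\varepsilon'(u)\nabla_H u\to q\,{\rm sgn}(u)|u|^{q-1}\nabla_H u$ in $L^{p/q}(X,\gamma;H)$ again by dominated convergence, using the uniform bound $|\Phi_\varepsilon'(t)|\le q|t|^{q-1}$ to dominate by the integrable function found in the first step.

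The main obstacle I anticipate is the interchange of the two limits and, in particular, handling the set $\{u=0\}$ where the limiting gradient formula reads as $0$ (since $|u|^{q-1}=0$ there for $q>1$). The clean way around this is to \emph{not} interchange limits simultaneously: for fixed $\varepsilon>0$, $\Phi_\varepsilon$ is genuinely $C^1$ with derivative vanishing at $0$, so $\Phi_\varepsilon(u_n)\to\Phi_\varepsilon(u)$ in $W^{1,p/q}$ follows from closability plus convergence of $\Phi_\varepsilon'(u_n)\nabla_H u_n$; this yields $\Phi_\varepsilon(u)\in W^{1,p/q}$ with $\nabla_H\Phi_\varepsilon(u)=\Phi_\varepsilon'(u)\nabla_H u$ for each fixed $\varepsilon$. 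Only afterwards do I let $\varepsilon\to0$, invoking closability of $\nabla_H$ once more: $\Phi_\varepsilon(u)\to|u|^q$ in $L^{p/q}$ and the gradients converge in $L^{p/q}(X,\gamma;H)$, forcing $|u|^q\in W^{1,p/q}$ with the claimed gradient. The delicate point in the inner convergence is controlling $\Phi_\varepsilon'(u_n)\nabla_H u_n-\Phi_\varepsilon'(u)\nabla_H u$, which I would split into $(\Phi_\varepsilon'(u_n)-\Phi_\varepsilon'(u))\nabla_H u_n+\Phi_\varepsilon'(u)(\nabla_H u_n-\nabla_H u)$ and handle using the (for fixed $\varepsilon$) uniform Lipschitz continuity and boundedness of $\Phi_\varepsilon'$ on the range, together with $u_n\to u$ in $L^p$ and $\nabla_H u_n\to\nabla_H u$ in $L^p(X,\gamma;H)$.
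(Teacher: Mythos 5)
Your proposal is correct in outline, but it is not the paper's argument: it is precisely the ``classical method'' that the authors name in the first sentence of their proof (regularizing $|\cdot|^q$ by $\eta_n(\xi)=(\xi^2+\tfrac1n)^{q/2}$, which is your $\Phi_\varepsilon$ up to the additive constant) and then explicitly decline to follow. The paper instead works directly with $|u_n|^q$ and $q\,{\rm sgn}(u_n)|u_n|^{q-1}\nabla_Hu_n$ for a cylindrical approximating sequence, controlling the differences via the elementary inequality $|{\rm sgn}(a)|a|^q-{\rm sgn}(b)|b|^q|\leq q|a-b|(|a|^{q-1}+|b|^{q-1})$, H\"older's inequality, a case split $q\geq 2$ versus $q\in[1,2)$, and a dominated-convergence argument on $\{u\neq0\}$ to handle the discontinuity of ${\rm sgn}$; closability of $\nabla_H$ then finishes. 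What your route buys is that you never touch ${\rm sgn}$ of an $L^p$ function: for each fixed $\varepsilon$ the nonlinearity is $C^1$, so the only limit that needs the degeneracy at $u=0$ is the outer one in $\varepsilon$, where $q>1$ makes the candidate gradient vanish there anyway. What it costs is a two-parameter limit in which the inner step (passing from $u_n$ to $u$ at fixed $\varepsilon$) secretly requires the same H\"older estimates and the same $q\geq2$ versus $q<2$ dichotomy that the paper carries out, so the total work is comparable.

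Two details of your write-up need repair, though neither is fatal. First, the uniform bound $|\Phi_\varepsilon'(t)|\leq q|t|^{q-1}$ is false for $q>2$: since $\Phi_\varepsilon'(t)=qt(t^2+\varepsilon^2)^{(q-2)/2}$ and the exponent $(q-2)/2$ is then positive, one has $|\Phi_\varepsilon'(t)|\geq q|t|^{q-1}$. The correct uniform bound for $\varepsilon\leq1$ is $|\Phi_\varepsilon'(t)|\leq q(|t|+1)^{q-1}$, which still lies in $L^{p/(q-1)}(X,\gamma)$ when composed with $u$, so your domination argument survives after this correction. Second, in the inner limit you invoke ``uniform Lipschitz continuity and boundedness of $\Phi_\varepsilon'$ on the range'': $\Phi_\varepsilon'$ is neither bounded nor (for $q>2$) globally Lipschitz on the unbounded range of $u$, and for $q<2$ plain Lipschitz control gives convergence of $\Phi_\varepsilon'(u_n)$ to $\Phi_\varepsilon'(u)$ in $L^{p/(q-1)}$ only after a Vitali/uniform-integrability argument, because $p/(q-1)>p$ there. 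These are exactly the points the paper's Step 2 is designed to settle, so you should either import those estimates or argue via uniform integrability; once you do, your proof closes.
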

\begin{proof}
The classical method consists in introducing the function $\eta_n(\xi):=\left(\xi^2+\frac1n\right)^{q/2}$ and approximating $|u|^q$ by means of the sequence $(\eta_n\circ u_n)\subseteq \mathcal FC_b^\infty(X)$ in $W^{1,p/q}(O,\gamma)$, where $(u_n)\subseteq \mathcal FC_b^\infty(X)$ is a sequence which converges to $u$ in $W^{1,p}(O,\gamma)$. However, we provide a different proof.

At first, we notice that, for every $q\geq 1$ and every $a,b\in\R$, we have
\begin{align}
\label{dis_pot_q}
\left|\text{sgn}(a)\left|a\right|^{q}-\text{sgn}(b)\left|b\right|^{q}\right|\leq q\left|a-b\right|\left|\left|a\right|^{q-1}+\left|b\right|^{q-1}\right|.    
\end{align}
Let $(u_{n})_{n\in \N}\subseteq \mathcal FC_b^\infty(X)$ be a sequence which converges to $u$ in $W^{1,p}(X,\gamma)$. Without loss of generality, we may suppose that $(u_n)_{n\in\N}$ pointwise converges to $u$. We split the proof into three steps.

\emph{Step 1.} Here we prove that $\left|u_{n}\right|^{q}$ converges to $\left|u\right|^{q}$
in $L^{p/q}(X,\gamma)$. Since $q>1$, from \eqref{dis_pot_q} it follows that
\begin{align}
\notag
\left\Vert \left|u_{n}\right|^{q}-\left|u\right|^{q}\right\Vert _{L^{p/q}(X,\gamma)}
\leq & q\left\Vert \left(\left|u_{n}\right|-\left|u\right|\right)\left(\left|u_{n}\right|^{q-1}+\left|u\right|^{q-1}\right)\right\Vert _{L^{p/q}(X,\gamma)} \\
\leq & q\left\Vert u_{n}-u\right\Vert _{L^{p}(X,\gamma)}\left\Vert \left|u_{n}\right|^{q-1}+\left|u\right|^{q-1}\right\Vert _{L^{p/(q-1)}(X,\gamma)},
\label{stima_1_modulo}
\end{align}
where we have used the H\"older inequality with $q$ and $q/(q-1)$. The first factor in the right-hand side of \eqref{stima_1_modulo} converges to $0$ as $n\rightarrow +\infty$, while the second is bounded, uniformly with respect to $n\in\N$, since
\begin{align*}
\left\Vert \left|u_{n}\right|^{q-1}+\left|u\right|^{q-1}\right\Vert _{L^{p/(q-1)}(X,\gamma)} 
\leq & \left\Vert \left|u_{n}\right|^{q-1}\right\Vert _{L^{p/(q-1)}(X,\gamma)}+\left\Vert \left|u\right|^{q-1}\right\Vert _{L^{p/(q-1)}(X,\gamma)} \\ 
\leq & \left\Vert u_{n}\right\Vert _{L^{p}(X,\gamma)}^{q-1}+\left\Vert u\right\Vert _{L^{p}(X,\gamma)}^{q-1},
\end{align*}
which is uniformly bounded with respect to $n\in\N$.

\emph{Step 2. }We want to show that $\text{sgn}(u_{n})|u_{n}|^{q-1}$
converges to $\text{sgn}(u)|u|^{q-1}$ in $L^{p/(q-1)}(X,\gamma)$.\\
Let us suppose $q\geq2$, hence $q-1\geq1$, and from \eqref{dis_pot_q} we infer that
\begin{align*}
& \int_{X}\left|\text{sgn}(u_{n})\left|u_{n}\right|^{q-1}-\text{sgn}(u)\left|u\right|^{q-1}\right|^{p/(q-1)}d\gamma \\
\leq & (q-1)^{p/(q-1)}\int_{X}\left|u_{n}-u\right|^{p/(q-1)}\left|\left|u_{n}\right|^{q-2}+\left|u\right|^{q-2}\right|^{p/(q-1)}d\gamma \\
\leq & (q-1)^{p/(q-1)}\left(\int_{X}\left|u_{n}-u\right|^{p}d\gamma\right)^{\frac{1}{q-1}} \\
& \times\left(\int_{X}\left|\left|u_{n}\right|^{q-2}+\left|u\right|^{q-2}\right|^{p/(q-2)}d\gamma\right)^{(q-2)/(q-1)} \\
=: & (q-1)^{p/(q-1)}A_{n}B_{n}.
\end{align*}
$A_{n}$ converges to $0$ as $n\rightarrow+\infty$, and 
\begin{align*}
B_{n}^{(q-1)/p}\leq\left\Vert \left|u_{n}\right|^{q-2}+\left|u\right|^{q-2}\right\Vert _{L^{p/(q-2)}(X,\gamma)}
\leq& \left\Vert \left|u_{n}\right|^{q-2}\right\Vert _{L^{p/(q-2)}(X,\gamma)}+\left\Vert \left|u\right|^{q-2}\right\Vert _{L^{p/(q-2)}(X,\gamma)} \\
\leq & \left\Vert u_{n}\right\Vert _{L^{p}(X,\gamma)}^{q-2}+\left\Vert u\right\Vert _{L^{p(q-2)}(X,\gamma)}^{q-2},
\end{align*}
which is uniformly bounded with respect to $n\in\N$.

Instead, if $q\in[1,2)$,
\begin{align*}
& \left\Vert \text{sgn}(u_{n})\left|u_{n}\right|^{q-1}-\text{sgn}(u)\left|u\right|^{q-1}\right\Vert _{L^{p/(q-1)}(X,\gamma)} \\
\leq & \left\Vert \text{sgn}(u_{n})\left|u_{n}\right|^{q-1}-\text{sgn}(u_{n})\left|u\right|^{q-1}\right\Vert _{L^{p/(q-1)}(X,\gamma)} \\
& +\left\Vert \text{sgn}(u_{n})\left|u\right|^{q-1}-\text{sgn}(u)\left|u\right|^{q-1}\right\Vert _{L^{p/(q-1)}(X,\gamma)} \\
\leq &\left\Vert \left|u_{n}\right|^{q-1}-\left|u\right|^{q-1}\right\Vert _{L^{p/(q-1)}(X,\gamma)} \\
& +\left(\int_{X}\left(\left(\text{sgn}(u_{n})-\text{sgn}(u)\right)\left|u\right|^{q-1}\right)^{p/(q-1)}d\gamma\right)^{(q-1)/p}
=:P_{n}+Q_{n}.
\end{align*}
 Since $t\mapsto t^{q-1}$ is concave, we get 
\[
P_{n}\leq\left\Vert \left|\left|u_{n}\right|-\left|u\right|\right|^{q-1}\right\Vert _{L^{p/(q-1)}(X,\gamma)}=\left\Vert u_{n}-u\right\Vert _{L^{p}(X,\gamma)}^{q-1}
\]
which converges to $0$ as $n\rightarrow+\infty$, while, if we set $U:=\{x\in X|u(x)\neq0\}$, then
\[
Q_{n}=\int_{U}\left(\text{sgn}(u_{n})-\text{sgn}(u)\right)\left|u\right|^{p}d\gamma,
\]
and it converges to $0$ by the dominated convergence, since $(u_n)_{n\in\N}$ pointwise converges to $u$.

\emph{Step 3.} By approximation, it is possible to show that $\nabla_H |u_{n}|^q=q\ \!\text{sgn}(u_{n})|u_{n}|^{q-1}\nabla_{H}u_{n}$ for every $n\in\N$. In this last step we prove that $\nabla_H |u_{n}|^q$
converges to $q\ \!\text{sgn}(u)|u|^{q-1}\nabla_{H}u$ in $L^{p/q}(X,\gamma,H)$ as $n\rightarrow+\infty$. We have
\begin{align*}
& \left\Vert q\ \!\text{sgn}(u_{n})|u_{n}|^{q-1}\nabla_{H}u_{n}-q\ \!\text{sgn}(u)|u|^{q-1}\nabla_{H}u\right\Vert _{L^{p/q}(X,\gamma)} \\
\leq & 
\left\Vert q\ \!\text{sgn}(u_{n})|u_{n}|^{q-1}\nabla_{H}u_{n}-q\ \!\text{sgn}(u)|u|^{q-1}\nabla_{H}u_{n}\right\Vert _{L^{p/q}(X,\gamma,H)} \\
&+\left\Vert q\ \!\text{sgn}(u)|u|^{q-1}\nabla_{H}u_{n}-q\ \!\text{sgn}(u)|u|^{q-1}\nabla_{H}u\right\Vert _{L^{p/q}(X,\gamma,H)}=:R_{n}+S_{n}.
\end{align*}
As far as $R_n$ is concerned, by applying the H\"older inequality with $q$ and $q/(q-1)$ we get
\begin{align*}
R_{n}\leq q\left\Vert \text{sgn}(u_{n})|u_{n}|^{q-1}-\text{sgn}(u_{n})|u_{n}|^{q-1}\right\Vert _{L^{p/(q-1)}(X,\gamma)}^{(q-1)/q}\cdot\left\Vert \nabla_{H}u_n\right\Vert _{L^{p}(X,H,\gamma)}^{\frac{1}{q}}.
\end{align*}
The first factor converges to $0$ as $n\rightarrow +\infty$ from Step 2 and the second one is uniformly bounded with respect to $n\in\N$, while 
\begin{align*}
S_{n}
\leq & q\left\Vert |u|^{q-1}\right\Vert _{L^{p/(q-1)}(X,\gamma)}^{(q-1)/q}\cdot\left\Vert \nabla_{H}u_{n}-\nabla_{H}u\right\Vert _{L^{p}(X,H,\gamma)}^{\frac{1}{q}} \\
= & \left\Vert q^{1/(q-1)}|u|\right\Vert _{L^{p}(X,\gamma)}^{1/q}\cdot\left\Vert \nabla_{H}u_{n}-\nabla_{H}u\right\Vert _{L^{p}(X,H,\gamma)}^{\frac{1}{q}},
\end{align*}
and the last term converges to $0$ as $n\rightarrow+\infty$, where again we have used the H\"older inequality with $q$ and $a/(a-1)$.

The proof is concluded.

\end{proof}

From \cite{Cel}, under Hypotheses \ref{claim:regularity}
(i)-(iii), for every $t\in(-\delta,\delta)$ and every $q<p$ it is defined the {\em trace operator} 
\[
\Tr_{t}:W^{1,p}(G^{-1}((-\infty,t)),\gamma)\rightarrow L^{q}(G^{-1}(\{t\}),\rho).
\]
If $f\in W^{1,p}(G^{-1}((-\infty,t)),\gamma)$ is the restriction of a continuous function on $X$, then 
\[
\Tr_{t}f=f_{|G^{-1}(\{t\})}.
\]

The following three results are proved in \cite{Cel}.
\begin{lem}{\cite[Cor. 3.2]{Cel}}
\label{lem:Density}
Assume Hypotheses \ref{claim:regularity} (i)-(iii), let $\delta_{0}>0$ and $O_{\delta_{0}}:=G^{-1}(I_{\delta_{0}})$. Then, for every $f\in {\rm Lip}(X)\subseteq L^{1}(O_{\delta_0},\gamma)$, the function 
\[
q_{f}(\xi):=\int_{G^{-1}(\{\xi\})}\frac{f}{\|\nabla_{H}G\|_{H}}\ d\rho, \quad -\delta_0<\xi<\delta_0,
\]
belongs to $L^{1}(I_{\delta_{0}},\mbox{\ensuremath{\mathscr{L}}}^{1})$ (with
$\mbox{\ensuremath{\mathscr{L}}}^{1}$ being the $1$-dimensional Lebesgue measure). Moreover, $q_{f}$ is a density of the measure $f\gamma\circ G^{-1}$ with respect to $\mbox{\ensuremath{\mathscr{L}}}^{1}$, and
\begin{align*}
\|q_f\|_{L^1(-\delta_0,\delta_0)}\leq \|f\|_{L^1(O_{\delta_0},\gamma)}.    
\end{align*}
\end{lem}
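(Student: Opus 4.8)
The plan is to obtain the statement from a Gaussian coarea formula of the type
\[
\int_{X}\Phi\,\|\nabla_{H}G\|_{H}\,d\gamma=\int_{\R}\left(\int_{G^{-1}(\{\xi\})}\Phi\,d\rho\right)d\xi,
\]
valid for every bounded Borel $\Phi:X\to\R$. Granting this, I would test it with $\Phi=(\psi\circ G)\,f/\|\nabla_{H}G\|_{H}$, where $\psi:\R\to\R$ is an arbitrary bounded Borel function. By Hypothesis \ref{claim:regularity}(iii) the factor $\|\nabla_{H}G\|_{H}^{-1}$ is bounded, and since $f\in{\rm Lip}(X)$ it lies in every $L^{p}(O_{\delta_{0}},\gamma)$, so $\Phi$ is admissible. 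On the level set $G^{-1}(\{\xi\})$ the factor $\psi\circ G$ equals the constant $\psi(\xi)$, whence the inner surface integral collapses to $\psi(\xi)\,q_{f}(\xi)$, while the left-hand side simplifies to $\int_{X}(\psi\circ G)f\,d\gamma$. Taking $\psi=\mathbf{1}_{A}$ for a Borel set $A\subseteq I_{\delta_{0}}$ then gives
\[
\int_{A}q_{f}(\xi)\,d\xi=\int_{G^{-1}(A)}f\,d\gamma=(f\gamma\circ G^{-1})(A),
\]
which is exactly the assertion that $q_{f}$ is a density of $f\gamma\circ G^{-1}$ with respect to $\mathscr{L}^{1}$ on $I_{\delta_{0}}$.

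The integrability of $q_{f}$ and the stated norm bound would follow at once from the same identity. Since $|f|$ is again Lipschitz, $q_{|f|}$ is well defined, and the density identity applied to $|f|$ with $A=I_{\delta_{0}}$ yields $\int_{I_{\delta_{0}}}q_{|f|}(\xi)\,d\xi=\int_{G^{-1}(I_{\delta_{0}})}|f|\,d\gamma=\|f\|_{L^{1}(O_{\delta_{0}},\gamma)}$. Because the surface integrand $f/\|\nabla_{H}G\|_{H}$ is dominated in modulus by $|f|/\|\nabla_{H}G\|_{H}$, one has the pointwise bound $|q_{f}(\xi)|\le q_{|f|}(\xi)$, and therefore $\|q_{f}\|_{L^{1}(I_{\delta_{0}})}\le\|f\|_{L^{1}(O_{\delta_{0}},\gamma)}<+\infty$, proving both integrability and the estimate.

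The main obstacle is establishing the coarea formula itself, and here I would argue by finite-dimensional reduction. Using the factorization $\gamma=\gamma_{F}\otimes\gamma_{F}^{\perp}$ and the definition $\rho=\sup_{F\le R_{\gamma}(X^{*})}\mathcal{S}_{F}^{\infty-1}$ as a monotone supremum, on each $F\cong\R^{m}$ the classical Federer coarea formula applies to a smooth finite-dimensional approximation $G_{m}$ of $G$ (e.g.\ a conditional expectation or a cylindrical truncation); inserting the Gaussian weight $G_{m}(z)=(2\pi)^{-m/2}e^{-\|z\|_{H}^{2}/2}$ into the Euclidean identity reproduces precisely the measures $\mathcal{S}_{F}^{\infty-1}$ of the Feyel--de la Pradelle construction, with the Euclidean gradient norm replaced by the finite-dimensional $H$-gradient norm of $G_{m}$. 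The delicate step is the passage to the limit: one must show that these approximating gradient norms converge to $\|\nabla_{H}G\|_{H}$ and that the level-set surface integrals converge to the $\rho$-integrals over $G^{-1}(\{\xi\})$, interchanging this limit with the $\xi$-integration. This is where Hypotheses \ref{claim:regularity}(i)--(iii) are used: the $H$-Lipschitz continuity together with the $W^{2,p}$-regularity ensures, via Remark \ref{rem:G_boundary}(iii), that the normalized field $\nabla_{H}G/\|\nabla_{H}G\|_{H}$ belongs to $W^{1,2}(X;H)$ and is nondegenerate, which supplies the uniform integrability and compactness needed to control the supremum defining $\rho$ and to justify the limit. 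As this coarea machinery is exactly what is developed in \cite{Cel}, in practice I would invoke \cite[Cor.~3.2]{Cel} rather than rebuild it from scratch.
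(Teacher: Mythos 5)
The paper offers no proof of Lemma \ref{lem:Density}: it is imported verbatim from \cite[Cor.~3.2]{Cel}, which is exactly what you conclude by doing, so the two approaches coincide. Your reduction of the density property and of the bound $\|q_f\|_{L^1(I_{\delta_0})}\le\|f\|_{L^1(O_{\delta_0},\gamma)}$ to the Gaussian coarea formula (testing with $\Phi=(\psi\circ G)f/\|\nabla_HG\|_H$, specializing to $\psi=\mathbbm{1}_A$, and using $|q_f|\le q_{|f|}$) is sound and is in substance how the result is established in \cite{Cel}; the only genuinely hard step, the coarea identity itself, is precisely the content of that reference, and you are right not to rebuild it here.
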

\medskip{}

\begin{lem}{\cite[Prop. 4.10]{Cel}}
\label{lem:Trace_0-1}Under Hypotheses \ref{claim:regularity}, for every $p>1$, every $t\in\I$ and every $f\in W^{1,p}(G^{-1}((-\infty,t)),\gamma)$, $\Tr_{t}f\equiv0$ if and only if the trivial extension $\overline f$ of $f$ out
of $G^{-1}((-\infty,t))$ belongs $W^{1,p}(X,\gamma)$.
\end{lem}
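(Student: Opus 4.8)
The plan is to reduce both implications to the integration-by-parts formula \eqref{int_by_parts_formula_intro}, applied on the sublevel set $O_t:=G^{-1}((-\infty,t))$, whose topological boundary is $\partial O_t=G^{-1}(\{t\})$ and is $\gamma$-negligible. Because $\partial O_t$ is $\gamma$-negligible and $\overline f$ vanishes on the open set $G^{-1}((t,+\infty))$, the locality of the $H$-gradient forces the only possible candidate for $\nabla_H\overline f$ to be the trivial extension $\overline{\nabla_H f}\in L^p(X,\gamma;H)$; the whole argument amounts to checking that this candidate genuinely represents the weak gradient of $\overline f$. Throughout I would test against products $fg$ with $g\in\mathcal{F}C_b^\infty(X)$, using that $\overline f\,g=\overline{fg}$, that $fg\in W^{1,p}(O_t,\gamma)$, and the Leibniz rule for the trace $\Tr_t(fg)=(\Tr_t f)\,g_{|\partial O_t}$.

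For the implication \emph{$\overline f\in W^{1,p}(X,\gamma)\Rightarrow \Tr_t f\equiv 0$}, I would fix $g\in\mathcal{F}C_b^\infty(X)$ and $h\in H$ and apply the Gaussian integration-by-parts identity on the whole space to $\overline{fg}=\overline f\,g\in W^{1,p}(X,\gamma)$, namely $\int_X(\partial_h(\overline f\,g)-\hat h\,\overline f\,g)\,d\gamma=0$; since $\overline f$ and $\nabla_H\overline f=\overline{\nabla_H f}$ vanish outside $O_t$, this restricts to $\int_{O_t}(\partial_h(fg)-\hat h fg)\,d\gamma=0$. Comparing with \eqref{int_by_parts_formula_intro} applied to $fg$ on $O_t$ yields $\int_{\partial O_t}(\Tr_t f)\,g\,\scal{\nabla_H G}{h}\norm{\nabla_H G}_H^{-1}\,d\rho=0$ for all such $g,h$. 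Fixing the orthonormal basis $\{h_i\}$ of $H$ and letting $g$ range over cylindrical functions, dense in $L^1(\partial O_t,\rho)$, I would deduce $(\Tr_t f)\,\scal{\nabla_H G}{h_i}=0$ $\rho$-a.e. for every $i$; since $\norm{\nabla_H G}_H^{-1}\in L^\infty(X)$ forces $\nabla_H G\neq 0$ $\rho$-a.e. on $\partial O_t$, at $\rho$-a.e. point some coefficient $\scal{\nabla_H G}{h_i}$ is nonzero, whence $\Tr_t f=0$ $\rho$-a.e.

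For the converse \emph{$\Tr_t f\equiv 0\Rightarrow \overline f\in W^{1,p}(X,\gamma)$}, I would invoke the weak-gradient characterization of $W^{1,p}(X,\gamma)$ for $p>1$ (see \cite{Bog}): it suffices to exhibit $V\in L^p(X,\gamma;H)$ with $\int_X\scal{V}{h}g\,d\gamma=-\int_X\overline f\,(\partial_h g-\hat h g)\,d\gamma$ for every $g\in\mathcal{F}C_b^\infty(X)$ and $h\in H$, and then $\overline f\in W^{1,p}(X,\gamma)$ with $\nabla_H\overline f=V$. Taking $V=\overline{\nabla_H f}$ and moving the derivative onto the product, this identity collapses exactly to $\int_{O_t}(\partial_h(fg)-\hat h fg)\,d\gamma=0$; by \eqref{int_by_parts_formula_intro} for $fg$ this integral equals $\int_{\partial O_t}\Tr_t(fg)\,\scal{\nabla_H G}{h}\norm{\nabla_H G}_H^{-1}\,d\rho$, which vanishes since $\Tr_t(fg)=(\Tr_t f)\,g_{|\partial O_t}=0$. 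This yields $\overline f\in W^{1,p}(X,\gamma)$ and closes the equivalence.

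The bookkeeping parts (the product/chain rule for $\nabla_H$, the fact that $\partial O_t$ is $\gamma$-negligible so that $\nabla_H\overline f=\overline{\nabla_H f}$, and the algebraic reduction of the weak-gradient identity to a single integral over $O_t$) are routine. The two steps I expect to be delicate are the Leibniz rule $\Tr_t(fg)=(\Tr_t f)\,g_{|\partial O_t}$ — which I would first verify on continuous representatives, where the trace is the restriction, and then extend by approximating $f$ in $W^{1,p}(O_t,\gamma)$ by Lipschitz functions and using the continuity of $\Tr_t$ into $L^q(\partial O_t,\rho)$ — and the density step that promotes the vanishing of the boundary integral for all cylindrical $g$ to the $\rho$-a.e. vanishing of $\Tr_t f$. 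These two points carry the real content; everything else is organized around the integration-by-parts formula \eqref{int_by_parts_formula_intro}.
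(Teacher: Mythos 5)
The paper contains no proof of this lemma: it is imported verbatim from \cite[Prop. 4.10]{Cel} (the text states explicitly that ``the following three results are proved in \cite{Cel}''), so there is no in-paper argument to compare yours against. Taken on its own, your reconstruction is coherent and in outline correct, and it rests on the two ingredients one would expect: the integration-by-parts formula \eqref{int_by_parts_formula_intro} on the sublevel set (transported to the level $t$ via Remark \ref{rem:G_boundary}(v)), and, for the implication $\Tr_t f\equiv 0\Rightarrow \overline f\in W^{1,p}(X,\gamma)$, the coincidence for $p>1$ of the Sobolev class defined as the domain of the closure of $\nabla_H$ with the class of $u\in L^p(X,\gamma)$ admitting a field $V\in L^p(X,\gamma;H)$ satisfying the weak-gradient identity against cylindrical test functions (see \cite{Bog}). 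That coincidence is the genuinely deep input --- it rests on Meyer's inequalities and is exactly where the restriction $p>1$ enters --- so it should be named as such rather than treated as a formality. Your treatment of the Leibniz rule $\Tr_t(fg)=(\Tr_t f)\,g_{|G^{-1}(\{t\})}$ by approximation with Lipschitz functions and continuity of the trace is fine.

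Two points need tightening. First, in the forward direction you deduce $\Tr_t f=0$ $\rho$-a.e.\ from the vanishing of $\int_{G^{-1}(\{t\})}(\Tr_t f)\,g\,\langle\nabla_HG,h_i\rangle_H\|\nabla_HG\|_H^{-1}\,d\rho$ for all cylindrical $g$ and all $i$, arguing that $\|\nabla_HG\|_H^{-1}\in L^\infty(X)$ forces $\nabla_HG\neq0$ at $\rho$-a.e.\ point of the level set. But Hypothesis \ref{claim:regularity}(iii) is only a $\gamma$-essential bound, while the restriction of $\rho$ to $G^{-1}(\{t\})$ is singular with respect to $\gamma$, so the bound does not transfer automatically; one must either use that the boundary integrand in \eqref{int_by_parts_formula_intro} is by construction (the trace of) a unit vector field, so that $\sum_i\bigl(\langle\nabla_HG,h_i\rangle_H/\|\nabla_HG\|_H\bigr)^2=1$ holds $\rho$-a.e.\ and summing the identities over $i$ gives $\Tr_t f=0$ directly, or appeal to the careful construction of the trace of $\nabla_HG$ in \cite{Cel}. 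Some such sentence must appear. Second, the identity $\nabla_H\overline f=\overline{\nabla_Hf}$ requires not only locality of the gradient on the open set where $\overline f$ vanishes, but also the compatibility of the intrinsic gradient of $W^{1,p}(O_t,\gamma)$ (defined through Lemma \ref{lem:Dirichlet_SObolev_O} as a closure of restrictions) with the restriction of the global gradient of $\overline f$; this is true and routine, but it is a statement about two a priori distinct closed operators and should be acknowledged.
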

\medskip{}

\begin{lem}{\cite[Prop. 4.1]{Cel}}
\label{lem:Celada_1} Under Hypotheses \ref{claim:regularity}, for every $p>1$, every $q\in[1,p)$ and every $t\in\I$, if $f\in W^{1,p}(X,\gamma)$ then
\begin{align}
& \int_{G^{-1}(\{t\})}|\Tr_{t}f|^{q} d\rho\notag \\
 =& q\int_{G^{-1}((-\infty,t))}|f|^{q-2}f\frac{\left\langle \nabla_{H}f,\nabla_{H}G\right\rangle _{H}}{\|\nabla_HG\|_H} d\gamma+\int_{G^{-1}((-\infty,t))}{\rm div}_\gamma\left(\frac{\nabla_HG}{\|\nabla_HG\|_H}\right)|f|^{q}\ d\gamma \notag \\
= & q\int_{G^{-1}((t,+\infty))}|f|^{q-2}f\frac{\left\langle \nabla_{H}f,\nabla_{H}G\right\rangle _{H}}{\|\nabla_HG\|_H} d\gamma+\int_{G^{-1}((t,+\infty))}{\rm div}_\gamma\left(\frac{\nabla_HG}{\|\nabla_HG\|_H}\right)|f|^{q}\ d\gamma. 
\label{eq:Celada_1}
\end{align}
\end{lem}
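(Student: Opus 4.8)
The plan is to read the right-hand side of \eqref{eq:Celada_1} as the integral of a single $H$-divergence and then to invoke the Gauss--Green formula \eqref{int_by_parts_formula_intro} on the sublevel set $G^{-1}((-\infty,t))$. I would introduce the vector field
\[
\Phi:=|f|^{q}\,\frac{\nabla_{H}G}{\|\nabla_{H}G\|_{H}},
\]
and first record that it has the regularity needed to run the argument. Lemma \ref{prop:mod_Sobolev_reg} gives $|f|^{q}\in W^{1,p/q}(X,\gamma)$ together with $\nabla_{H}|f|^{q}=q\,{\rm sgn}(f)|f|^{q-1}\nabla_{H}f$; Remark \ref{rem:G_boundary}(iii) gives $\nabla_{H}G/\|\nabla_{H}G\|_{H}\in W^{1,2}(X;H)$ together with the explicit expression for its $H$-divergence; and Hypothesis \ref{claim:regularity}(iii) (boundedness of $\|\nabla_{H}G\|_{H}^{-1}$) combined with Remark \ref{rem:G_boundary}(vi) (so that ${\rm div}_{\gamma}(\nabla_{H}G/\|\nabla_{H}G\|_{H})\in L^{r}(X,\gamma)$ for every $r>1$) ensures, via H\"older's inequality, that each product appearing below lies in $L^{1}(X,\gamma)$.

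The core computation is the Leibniz rule for the $H$-divergence, which yields, $\gamma$-a.e.,
\[
{\rm div}_{\gamma}\Phi
= \scal{\nabla_{H}|f|^{q}}{\frac{\nabla_{H}G}{\|\nabla_{H}G\|_{H}}}
+ |f|^{q}\,{\rm div}_{\gamma}\Big(\frac{\nabla_{H}G}{\|\nabla_{H}G\|_{H}}\Big)
= q\,|f|^{q-2}f\,\frac{\scal{\nabla_{H}f}{\nabla_{H}G}}{\|\nabla_{H}G\|_{H}}
+ |f|^{q}\,{\rm div}_{\gamma}\Big(\frac{\nabla_{H}G}{\|\nabla_{H}G\|_{H}}\Big),
\]
which is exactly the integrand on the right-hand side of \eqref{eq:Celada_1}. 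Writing $\Phi=\sum_{i}\Phi_{i}h_{i}$ with $\Phi_{i}=\scal{\Phi}{h_{i}}$, applying \eqref{int_by_parts_formula_intro} to each pair $(\Phi_{i},h_{i})$ — legitimate at level $t\in\I$ because $G-t$ still satisfies Hypotheses \ref{claim:regularity} by Remark \ref{rem:G_boundary}(v) — and summing over $i$, I would obtain the vectorial Gauss--Green identity. Since on $G^{-1}(\{t\})$ the conormal is $\nabla_{H}G/\|\nabla_{H}G\|_{H}$ and $\scal{\Phi}{\nabla_{H}G/\|\nabla_{H}G\|_{H}}=|f|^{q}$, whose trace is $|\Tr_{t}f|^{q}$, this collapses to
\[
\int_{G^{-1}((-\infty,t))}{\rm div}_{\gamma}\Phi\,d\gamma
= \int_{G^{-1}(\{t\})}|\Tr_{t}f|^{q}\,d\rho ,
\]
which is the first equality in \eqref{eq:Celada_1}. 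The second equality is the analogue for the complementary superlevel set $G^{-1}((t,+\infty))$, obtained by running the identical argument with $-G$ in place of $G$ (again admissible by Remark \ref{rem:G_boundary}(v)), the only bookkeeping change being the orientation of the conormal along $G^{-1}(\{t\})$.

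The main obstacle is making the Gauss--Green step rigorous, since $f$ is merely in $W^{1,p}(X,\gamma)$ — neither smooth nor bounded — whereas \eqref{int_by_parts_formula_intro} is established for regular functions. I would therefore argue by approximation: take $(f_{n})\subseteq\mathcal{F}C_{b}^{\infty}(X)$ converging to $f$ in $W^{1,p}(X,\gamma)$ (and, up to a subsequence, pointwise), apply the identity to $\Phi_{n}=|f_{n}|^{q}\,\nabla_{H}G/\|\nabla_{H}G\|_{H}$, and pass to the limit. Convergence of the volume integrals is handled exactly as in Lemma \ref{prop:mod_Sobolev_reg}, using $\|\nabla_{H}G\|_{H}^{-1}\in L^{\infty}(X)$ and the $L^{r}$-integrability of ${\rm div}_{\gamma}(\nabla_{H}G/\|\nabla_{H}G\|_{H})$ from Remark \ref{rem:G_boundary}(vi). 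The delicate point is the convergence of the surface integrals: here I would use that $|f_{n}|^{q}\to|f|^{q}$ in $W^{1,p/q}(X,\gamma)$ and that the trace operator is continuous from $W^{1,p/q}$ into $L^{1}(G^{-1}(\{t\}),\rho)$ (admissible since $1<p/q$), so that $\Tr_{t}(|f_{n}|^{q})\to\Tr_{t}(|f|^{q})=|\Tr_{t}f|^{q}$ in $L^{1}(G^{-1}(\{t\}),\rho)$; the quantitative control afforded by Lemma \ref{lem:Density}, relating boundary integrals to bulk $L^{1}$-norms, is what closes this estimate. Once the surface terms are shown to pass to the limit, \eqref{eq:Celada_1} follows for every $f\in W^{1,p}(X,\gamma)$.
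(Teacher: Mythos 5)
The paper does not actually prove this lemma: it is one of three results imported verbatim from Celada--Lunardi (the sentence immediately preceding it reads ``The following three results are proved in \cite{Cel}''), so there is no internal proof to measure yours against line by line. That said, your strategy --- integrate ${\rm div}_\gamma\bigl(|f|^{q}\,\nabla_HG/\|\nabla_HG\|_H\bigr)$ over the sublevel set, expand by the Leibniz rule using Lemma \ref{prop:mod_Sobolev_reg} and Remark \ref{rem:G_boundary}(iii), identify the boundary term via a Gauss--Green formula of the type \eqref{int_by_parts_formula_intro}, and get the superlevel-set identity by replacing $G$ with $-G$ as Remark \ref{rem:G_boundary}(v) permits --- is the natural one and is essentially the mechanism behind \cite[Prop.~4.1]{Cel}. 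The volume-side bookkeeping (integrability via H\"older, Hypothesis \ref{claim:regularity}(iii), Remark \ref{rem:G_boundary}(vi), and the approximation of $f$ by $\mathcal FC_b^\infty(X)$ functions handled as in Lemma \ref{prop:mod_Sobolev_reg}) is sound.

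The step you assert rather than establish is the collapse of the boundary term. Summing \eqref{int_by_parts_formula_intro} over the components $\Phi_i=\scal{\Phi}{h_i}$ produces $\sum_i\int\Tr_t(\Phi_i)\,\nu_i\,d\rho$ with $\nu_i=\scal{\nabla_HG}{h_i}/\|\nabla_HG\|_H$, and you need this to equal $\int\Tr_t\scal{\Phi}{\nu}\,d\rho=\int|\Tr_tf|^q\,d\rho$. Since $\nu$ is only a bounded Sobolev field (not continuous), $\Tr_t(|f|^q\nu_i)$ is not simply the product of traces, and the convergence of the series of boundary integrals is not covered by the scalar trace theory you invoke; this identification is precisely where \cite{Cel} spends its effort. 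Relatedly, \eqref{int_by_parts_formula_intro} is stated for scalar $\varphi\in W^{1,p}(O,\gamma)$ with a fixed $h\in H$, and each $\Phi_i$ must be checked to lie in such a space with the series on the volume side converging in $L^1$ --- unproblematic once you work with smooth approximants $f_n$ (so that $\Phi_n\in W^{1,2}(X;H)\cap L^\infty$ and \cite[Theo.~5.8.3]{Bog} applies), but worth saying explicitly. Neither point derails the plan; they are the genuinely delicate parts of the argument and should not be left implicit.
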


\begin{rem}
\label{rem:null-boundary}An easy consequence of Lemma \ref{lem:Density}
is that $\gamma(G^{-1}(\{t\}))=0$ for every $t\in\I$. Further, if we take $f=1$ in 
\eqref{eq:Celada_1}, we infer that
\begin{align}
\notag
\rho(G^{-1}(\{t\}))
= & \int_{G^{-1}(\{t\})}d\rho \\
\leq & \left\|{\rm div}_\gamma\left(\frac{\nabla_HG}{\|\nabla_HG\|_H}\right)\right\|_{L^{1}(O,\gamma)}+\left\|{\rm div}_\gamma\left(\frac{\nabla_HG}{\|\nabla_HG\|_H}\right)\right\|_{L^{\infty}(G^{-1}(I_\delta))}<+\infty.
\label{stima_rho_livelli_G}
\end{align}
\end{rem}

\section{Equivalent definitions of $W^{1,p}_0(O,\gamma)$}
We set $A_{t}:=G^{-1}((t,\delta))$, and we prove the following two intermediate results.

\begin{lem}
\label{lem:ineq_trace} 
Let Hypotheses \ref{claim:regularity} be satisfied. Then, for every $q\in[1,+\infty)$, there exists $C>0$ such that for every $t\in(-\delta,0)$
and $f\in W^{1,q}(X,\gamma)$ we have
\begin{equation}
\|\Tr_{t}f\|_{L^{q}(G^{-1}(\{t\}),\rho)}^{q}\leq C(\|f\|_{L^{q}(A_{t},\gamma)}^{q-1}\|\nabla_{H}f\|_{L^{q}(A_{t},\gamma,H)}+\|f\|_{L^{q}(A_{t},\gamma)}^{q}).\label{eq:ineq_trace}
\end{equation}
\end{lem}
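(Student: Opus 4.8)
The plan is to apply the trace identity \eqref{eq:Celada_1} not to $f$ itself but to a truncation $g=\theta(G)f$, chosen so that the integrals in \eqref{eq:Celada_1} are confined to the slab $A_t$ while the trace at level $t$ is left unchanged. Fix $q\in[1,+\infty)$ and select once and for all a cutoff $\theta\in C_b^\infty(\R)$ with $0\le\theta\le1$, $\theta\equiv1$ on $(-\infty,0]$ and $\theta\equiv0$ on $[\delta,+\infty)$. Every $t\in(-\delta,0)$ satisfies $t<0$, so $\theta(t)=1$; and $\theta\circ G$ vanishes on $G^{-1}([\delta,+\infty))$, so $g:=\theta(G)f$ is supported in $G^{-1}((-\infty,\delta))$.

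First I would treat $f\in\mathcal FC_b^\infty(X)$. Since $\theta,\theta'$ are bounded and $G$ is $H$-Lipschitz, the chain and product rules give $g\in W^{1,s}(X,\gamma)$ for every $s>q$, with $\nabla_H g=\theta(G)\nabla_H f+f\,\theta'(G)\nabla_H G$; hence \eqref{eq:Celada_1} applies to $g$ with exponent $q$. Because $\theta(G)\equiv\theta(t)=1$ on $G^{-1}(\{t\})$ we have $\Tr_t g=\Tr_t f$, and because $g\equiv0$ on $G^{-1}([\delta,+\infty))$ the two integrals over $G^{-1}((t,+\infty))$ in the second line of \eqref{eq:Celada_1} collapse onto $A_t=G^{-1}((t,\delta))$, so that
\begin{align*}
\int_{G^{-1}(\{t\})}|\Tr_t f|^q\,d\rho
= q\int_{A_t}|g|^{q-2}g\,\frac{\langle\nabla_H g,\nabla_H G\rangle_H}{\|\nabla_H G\|_H}\,d\gamma
+\int_{A_t}{\rm div}_\gamma\Big(\frac{\nabla_H G}{\|\nabla_H G\|_H}\Big)|g|^q\,d\gamma.
\end{align*}

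Next I would estimate the right-hand side. Using $|g|^{q-2}g=\theta(G)^{q-1}|f|^{q-2}f$ (recall $\theta\ge0$) and the expansion of $\nabla_H g$, the first integral splits into a term carrying $\theta(G)^q|f|^{q-2}f\,\langle\nabla_H f,\nabla_H G\rangle_H/\|\nabla_H G\|_H$ and one carrying $\theta(G)^{q-1}\theta'(G)|f|^q\|\nabla_H G\|_H$. For the former, $0\le\theta\le1$, Cauchy--Schwarz and Hölder with exponents $q/(q-1)$ and $q$ (trivial for $q=1$) bound it by $q\|f\|_{L^q(A_t,\gamma)}^{q-1}\|\nabla_H f\|_{L^q(A_t,\gamma,H)}$; for the latter, boundedness of $\theta,\theta'$ together with $\|\nabla_H G\|_H\le c$ ($c$ the $H$-Lipschitz constant of $G$) bound it by a multiple of $\|f\|_{L^q(A_t,\gamma)}^q$. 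Finally, as $A_t\subseteq G^{-1}(\I)$ for every $t\in(-\delta,0)$, the divergence term is dominated by $\|{\rm div}_\gamma(\nabla_H G/\|\nabla_H G\|_H)\|_{L^\infty(G^{-1}(\I))}\,\|f\|_{L^q(A_t,\gamma)}^q$, finite by Remark \ref{rem:G_boundary}(iii). Collecting the three bounds yields \eqref{eq:ineq_trace}.

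The decisive point is that the resulting constant depends only on $q$, on the fixed cutoff $\theta$, on $c$, and on the fixed number $\|{\rm div}_\gamma(\nabla_H G/\|\nabla_H G\|_H)\|_{L^\infty(G^{-1}(\I))}$, and \emph{not} on $t$, precisely because $A_t\subseteq G^{-1}(\I)$ uniformly for $t\in(-\delta,0)$. The one genuinely technical issue is matching regularity: \eqref{eq:Celada_1} is stated for $f\in W^{1,p}(X,\gamma)$ with exponent strictly below $p$, whereas here $f\in W^{1,q}(X,\gamma)$ with exponent exactly $q$. I would close this gap by density, applying the estimate just obtained to $f_n-f_m$ for an approximating sequence $(f_n)\subseteq\mathcal FC_b^\infty(X)$ with $f_n\to f$ in $W^{1,q}(X,\gamma)$: this shows $(\Tr_t f_n)$ is Cauchy in $L^q(G^{-1}(\{t\}),\rho)$, and passing to the limit extends both the trace and the inequality to all of $W^{1,q}(X,\gamma)$. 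The main obstacle is therefore conceptual rather than computational, namely designing the cutoff so that it simultaneously preserves the trace and confines the integration to $A_t$ with a $t$-independent constant.
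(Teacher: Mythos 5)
Your proposal is correct and follows essentially the same route as the paper: the paper also introduces the cutoff $\theta\in C_b^\infty(\R)$ with $\theta\equiv1$ on $(-\infty,0]$ and $\theta\equiv0$ on $[\delta,+\infty)$, applies Lemma \ref{lem:Celada_1} to $\psi=f\,(\theta\circ G)$ using the ``upper'' form of \eqref{eq:Celada_1} so the integrals collapse onto $A_t$, and obtains the same $t$-independent constant. The only cosmetic difference is that the paper reduces to Lipschitz continuous $f$ by density rather than to $\mathcal FC_b^\infty(X)$, which resolves the exponent-matching issue in exactly the way you describe.
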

\begin{proof}
By density, it is enough to consider Lipschitz continuous functions $f$.

Arguing as in the proof of \cite[Prop. 4.1]{Cel}, we introduce a function $\theta\in C_{b}^{\infty}(\R)$ such that $\theta=1$ in $(-\infty,0]$, $\theta=0$ in $[\delta,+\infty)$ and $\theta(x)\in[0,1]$ for every $x\in\R$. We define the function $\psi:=f\cdot(\theta\circ G)$. The function $\psi$ belongs to $ W^{1,s}(X,\gamma)$ for every $s\in[1,+\infty)$ (because
$f$ is Lipschitz continuous and $G\in \mathcal H^1(X)$) with $\nabla_H\psi=(\theta\circ G)\nabla_Hf+f(\theta'\circ G)\nabla_HG$. From its definition, $\psi =0$ on $G^{-1}((\delta,+\infty))$, $|\psi|\leq|f|$ on $X$, and
\[
\|\psi\|_{L^{s}(G^{-1}((t,+\infty)),\gamma)}=\|\psi\|_{L^{s}(A_{t},\gamma)},
\qquad 
\|\nabla_{H}\psi\|_{L^{s}(G^{-1}((t,+\infty));\gamma,H)}=\|\nabla_{H}\psi\|_{L^{s}(A_{t},\gamma,H)}.
\]
Finally, $\psi_{|G^{-1}(\{t\})}\equiv f$. Hence, we can apply Lemma
\ref{lem:Celada_1}, which gives
\begin{align*}
& \int_{G^{-1}(\{t\})}|\Tr_{t}f|^{q}d\rho
=  \int_{G^{-1}(\{t\})}|\Tr_{t}\psi|^{q} d\rho \\
=& q\int_{A_{t}}|\psi|^{q-2}\psi\frac{\left\langle \nabla_{H}\psi,\nabla_{H}G\right\rangle _{H}}{\|\nabla_HG\|_H}\ d\gamma+\int_{A_{t}}{\rm div}_\gamma\left(\frac{\nabla_HG}{\|\nabla_HG\|_H}\right)|\psi|^{q}\ d\gamma \\
\leq & 
q\int_{A_{t}}|\psi|^{q-1}\|\nabla_{H}\psi\|_{H}\ d\gamma+\int_{A_{t}}{\rm div}_\gamma\left(\frac{\nabla_HG}{\|\nabla_HG\|_{H}}\right)|\psi|^{q}\ d\gamma .
\end{align*}
Recalling that ${\rm div}_\gamma(\nabla_HG/\|\nabla_HG\|_H)\in L^{\infty}(A_{t},\gamma)$ (see Remark \ref{rem:G_boundary}$(iii)$), we infer that
\begin{align*}
& \int_{G^{-1}(\{t\})}|\Tr_{t}f|^{q}d\rho \\
\leq & q\|\psi\|_{L^{q}(A_{t},\gamma)}^{q-1}\|\nabla_{H}\psi\|_{L^{q}(A_{t},\gamma,H)}+ \left\|{\rm div}_\gamma\left(\frac{\nabla_HG}{\|\nabla_HG\|_H}\right)\right\|_{L^{\infty}(A_{t},\gamma)}\|\psi\|_{L^{q}(A_{t},\gamma)}^{q} \\
\leq & C_1\|f\|_{L^{q}(A_{t},\gamma)}^{q-1}(\|\nabla_{H}f\|_{L^{q}(A_{t},\gamma,H)}+\|f\|_{L^{q}(A_{t},\gamma)})+C_2\|f\|_{L^{q}(A_{t},\gamma)}^{q} \\
\leq & (C_1+C_2)(\|f\|_{L^q(A_t,\gamma)}^q+\|\nabla_Hf\|_{L^q(A_t,\gamma;H)}),
\end{align*}
where $C_1=q(1+\|\theta'\|_\infty\|\nabla_HG\|_{L^\infty(X;H)})$ and $C_2=\left\|{\rm div}_\gamma\left(\frac{\nabla_HG}{\|\nabla_HG\|_H}\right)\right\|_{L^{\infty}(A_{t},\gamma)}$. The proof is now complete.
\end{proof}

\begin{lem}
\label{lem:pseudo_Poinc}Let Hypotheses \ref{claim:regularity} be satisfied. Then, for every $p>1$ there exists $C_{0}>0$ and $\delta_{0}\in(0,\delta)$ such that, for every $f\in W^{1,p}(O,\gamma)$ with $\Tr_{0}f\equiv0$ on $G^{-1}(\{0\})$ and every $t\in(-\delta_{0},0)$, we have 
\[
\|f\|_{L^p(G^{-1}((t,0)),\gamma)}\leq 2C_1|t|\|\nabla_{H}f\|_{L^{p}(G^{-1}((t,0)),\gamma;H)}.
\]
\end{lem}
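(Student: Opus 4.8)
The plan is to run a one-dimensional Poincaré argument in the direction transverse to the level sets of $G$, using the vanishing trace at $\{G=0\}$ as the boundary datum; the constant comes out proportional to the collar width $|t|$ because a level-wise estimate is integrated over $\xi\in(t,0)$, which produces a factor $|t|$. Since $\Tr_{0}f\equiv0$, Lemma \ref{lem:Trace_0-1} gives $\bar f\in W^{1,p}(X,\gamma)$, and I would fix a sequence $(g_{n})\subseteq\F C_{b}^{\infty}(X)$ with $g_{n}\to\bar f$ in $W^{1,p}(X,\gamma)$. The point of passing to cylindrical functions is that they lie in every $W^{1,r}(X,\gamma)$, so Lemma \ref{lem:Celada_1} is available for each $g_{n}$ with the exponent $q=p$ (choosing any $r>p$), whereas it could not be applied with $q=p$ to $\bar f$ directly.

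For $\xi\in(t,0)$ set $\Phi_{n}(\xi):=\int_{G^{-1}(\{\xi\})}|g_{n}|^{p}\,d\rho$. Applying Lemma \ref{lem:Celada_1} with $q=p$ at the two levels $\xi,0\in\I$ and subtracting, the volume integrals over $G^{-1}((-\infty,\xi))$ and $G^{-1}((-\infty,0))$ differ only by integrals over the collar $G^{-1}((\xi,0))$ (the intermediate level sets being $\gamma$-null by Remark \ref{rem:null-boundary}). Estimating $|g_{n}|^{p-2}g_{n}\,\langle\nabla_{H}g_{n},\nabla_{H}G\rangle_{H}/\|\nabla_{H}G\|_{H}$ by $|g_{n}|^{p-1}\|\nabla_{H}g_{n}\|_{H}$ via Cauchy--Schwarz and using $\|{\rm div}_{\gamma}(\nabla_{H}G/\|\nabla_{H}G\|_{H})\|_{L^{\infty}(G^{-1}(\I))}=:C_{2}$ (Remark \ref{rem:G_boundary}(iii)), I obtain
\[
\Phi_{n}(\xi)\le \Phi_{n}(0)+p\int_{G^{-1}((\xi,0))}|g_{n}|^{p-1}\|\nabla_{H}g_{n}\|_{H}\,d\gamma+C_{2}\int_{G^{-1}((\xi,0))}|g_{n}|^{p}\,d\gamma .
\]

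Next I would convert the volume norm into an integral of $\Phi_{n}$ over levels: since each $g_{n}$ is (bounded) Lipschitz, Lemma \ref{lem:Density} applied to $|g_{n}|^{p}$ together with $\|\nabla_{H}G\|_{H}^{-1}\in L^{\infty}(X)$ (Hypothesis \ref{claim:regularity}(iii)) gives $\int_{G^{-1}((t,0))}|g_{n}|^{p}\,d\gamma\le\kappa\int_{t}^{0}\Phi_{n}(\xi)\,d\xi$ with $\kappa:=\big\|\,\|\nabla_{H}G\|_{H}^{-1}\big\|_{L^{\infty}(X)}$. As the collar integrals above are dominated by their values on the full shell $G^{-1}((t,0))$, integrating in $\xi\in(t,0)$ pulls out the factor $|t|$ and yields
\[
M_{0}^{(n)}:=\int_{G^{-1}((t,0))}|g_{n}|^{p}\,d\gamma\le\kappa\,|t|\Big(\Phi_{n}(0)+p\,M_{1}^{(n)}+C_{2}\,M_{0}^{(n)}\Big),
\]
where $M_{1}^{(n)}:=\int_{G^{-1}((t,0))}|g_{n}|^{p-1}\|\nabla_{H}g_{n}\|_{H}\,d\gamma$. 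Letting $n\to\infty$, one has $M_{0}^{(n)}\to\|f\|_{L^{p}(G^{-1}((t,0)),\gamma)}^{p}$ and $M_{1}^{(n)}\to\int_{G^{-1}((t,0))}|f|^{p-1}\|\nabla_{H}f\|_{H}\,d\gamma$ by Hölder and the $W^{1,p}$-convergence, and crucially $\Phi_{n}(0)\to0$: because $\bar f\equiv0$ on the outer collar $G^{-1}((0,\delta))$, the $L^{p}$-trace estimate of Lemma \ref{lem:ineq_trace} at the level $0$ (valid verbatim since $0\in\I$ and $G^{-1}((0,\delta))\subseteq G^{-1}(\I)$) bounds $\Phi_{n}(0)=\|\Tr_{0}g_{n}\|_{L^{p}(G^{-1}(\{0\}),\rho)}^{p}$ by norms of $g_{n}$ over $G^{-1}((0,\delta))$, which tend to $0$. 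Finally I would fix $\delta_{0}\in(0,\delta)$ with $\kappa C_{2}\delta_{0}\le\tfrac12$, so that for $|t|<\delta_{0}$ the term $\kappa C_{2}|t|M_{0}$ is absorbed on the left, and Hölder's inequality with exponents $p/(p-1)$ and $p$ applied to $M_{1}:=\lim_{n}M_{1}^{(n)}$ turns the resulting bound into $\|f\|_{L^{p}(G^{-1}((t,0)),\gamma)}\le 2\kappa p\,|t|\,\|\nabla_{H}f\|_{L^{p}(G^{-1}((t,0)),\gamma;H)}$, which is the claim.

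I expect the genuine obstacle to be the boundary term $\Phi_{n}(0)$. The trace operator is continuous only into $L^{q}(\rho)$ for $q<p$, so one cannot simply pass $\Tr_{0}g_{n}\to\Tr_{0}\bar f=0$ in $L^{p}(\rho)$; the whole point of invoking Lemma \ref{lem:ineq_trace} at the level $0$, together with the fact that the trivial extension vanishes identically on the outer collar $G^{-1}((0,\delta))$, is exactly to force $\|\Tr_{0}g_{n}\|_{L^{p}(\rho)}^{p}\to0$. The only other delicate point is the absorption step, where the smallness of $\delta_{0}$ is dictated by the $L^{\infty}(G^{-1}(\I))$ bound on ${\rm div}_{\gamma}(\nabla_{H}G/\|\nabla_{H}G\|_{H})$ and the essential boundedness of $\|\nabla_{H}G\|_{H}^{-1}$.
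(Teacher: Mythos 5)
Your proof is correct, and it reaches the endpoint exponent $q=p$ by a genuinely different device than the paper, even though the overall skeleton (trace formula on collars, disintegration via Lemma \ref{lem:Density} to extract the factor $|t|$, absorption of the zero-order term for $|t|$ small, division by $\|f\|_{L^p}^{p-1}$) is the same. The paper applies the trace inequality of Lemma \ref{lem:ineq_trace} to the powers $|\overline f|^{q}$ with $q\in(1,p)$ --- which is why it first proves Lemma \ref{prop:mod_Sobolev_reg}, giving $|\overline f|^{q}\in W^{1,p/q}(X,\gamma)$ --- exploits that $\overline f\equiv 0$ on $G^{-1}((0,+\infty))$ so that the one-sided collar $A_{s}=G^{-1}((s,\delta))$ in \eqref{eq:ineq_trace} collapses to $G^{-1}((s,0))$, and only at the end lets $q\to p$ in $\|f\|_{L^{q}}^{q}\leq C_1|t|(\cdots)$. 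You instead keep $q=p$ throughout by working with cylindrical approximations $g_{n}$ of $\overline f$, to which Lemma \ref{lem:Celada_1} applies at exponent $p$, and you pay for this with the boundary term $\Phi_{n}(0)$, which you correctly identify as the delicate point and dispose of by applying Lemma \ref{lem:ineq_trace} at the level $t=0$ (a harmless extension of its stated range, since the cutoff $\theta$ in its proof equals $1$ on $(-\infty,0]$) together with $\overline f\equiv0$ and $\nabla_{H}\overline f=0$ a.e.\ on $G^{-1}((0,\delta))$. Your route buys the avoidance of Lemma \ref{prop:mod_Sobolev_reg} and of the $q\to p$ limit altogether; the paper's route avoids any discussion of convergence of traces of approximants at the level $0$. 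Two small points worth making explicit in a write-up: you should, as the paper does, dispose of the trivial case $\|f\|_{L^p(G^{-1}((t,0)),\gamma)}=0$ before dividing by $\|f\|_{L^p}^{p-1}$; and the convergence $M_{1}^{(n)}\to\int_{G^{-1}((t,0))}|f|^{p-1}\|\nabla_{H}f\|_{H}\,d\gamma$ deserves a line (it follows from $|g_{n}|^{p-1}\to|\overline f|^{p-1}$ in $L^{p/(p-1)}$, which is essentially Step 1 of Lemma \ref{prop:mod_Sobolev_reg}, so a trace of that lemma reappears here in weaker form).
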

\begin{proof}
By density, it is enough to consider Lipschitz continuous functions $f$.

Let us assume that $\|f\|_{L^p(G^{-1}((t,0),\gamma)}\neq 0$. From Lemma \ref{lem:Trace_0-1}, the trivial extension $\overline f$ of $f$ belongs to $W^{1,p}(X,\gamma)$. From Lemma \ref{prop:mod_Sobolev_reg} it follows that $|\overline f|^q\in L^{p/q}(X,\gamma)$ for every $q\in(1,p)$, and by applying Lemma \ref{lem:ineq_trace} to the function $|\overline f|^q$, for every $s\in(-\delta,0)$ we get
\begin{align*}
\|\Tr_{s}| f|^q\|_{L^{p/q}(G^{-1}(\{s\}),\rho)}^{p/q}
= & \|\Tr_{s}|\overline f|^q\|_{L^{p/q}(G^{-1}(\{s\}),\rho)}^{p/q} \\
\leq & C(\|f\|_{L^{p}(G^{-1}((s,0)),\gamma)}^{p-q}\||f|^{q-1}\nabla_{H}f\|_{L^{p/q}(G^{-1}((s,0)),\gamma; H)} \\
& +\|f\|_{L^{p}(G^{-1}((s,0)),\gamma)}^{q}) \\
\leq & C(\|f\|_{L^{p}(G^{-1}((s,0)),\gamma)}^{p-q}\|f\|_{L^p(G^{-1}((s,0),\gamma)}^{q-1}\|\nabla_{H}f\|_{L^{p}(G^{-1}((s,0)),\gamma; H)} \\
& +\|f\|_{L^{p}(G^{-1}((s,0)),\gamma)}^{q}) \\
\leq & C(\|f\|_{L^{p}(G^{-1}((s,0)),\gamma)}^{p-1}\|\nabla_{H}f\|_{L^{p}(G^{-1}((s,0)),\gamma; H)} +\|f\|_{L^{p}(G^{-1}((s,0)),\gamma)}^{p}),
\end{align*}
where $C=\frac{p}{q}(1+\|\theta'\|_\infty\|\nabla_HG\|_{L^\infty(X;H)})+\left\|{\rm div}_\gamma\left(\frac{\nabla_HG}{\|\nabla_HG\|_H}\right)\right\|_{L^{\infty}(A_{t},\gamma)}$ and we have used the fact that $\bar{f}\equiv0$ in $G^{-1}((0,+\infty))$ (hence the norm of $f$ in $G^{-1}((s,+\infty))$ coincides with that in $G^{-1}((s,0))$). In particular, it follows that
\begin{align}
\label{stima_traccia_q_int}
\|\Tr_{s}| f|^q\|_{L^{p/q}(G^{-1}(\{s\}),\rho)}^{p/q}
\leq \widetilde C(\|f\|_{L^{p}(G^{-1}((s,0)),\gamma)}^{p-1}\|\nabla_{H}f\|_{L^{p}(G^{-1}((s,0)),\gamma; H)} +\|f\|_{L^{p}(G^{-1}((s,0)),\gamma)}^{p}),
\end{align}
where $\widetilde C := p(1+\|\theta'\|_\infty\|\nabla_HG\|_{L^\infty(X;H)})+\left\|{\rm div}_\gamma\left(\frac{\nabla_HG}{\|\nabla_HG\|_H}\right)\right\|_{L^{\infty}(I_{\delta},\gamma)}$, for every $q\in(1,p)$ and every $s\in(-\delta,0)$.

Let $q\in(1,p)$. From Lemma \ref{lem:Density}, for every $t\in(-\delta,0)$ we have
\begin{align}
\int_{G^{-1}((t,0))}|f|^qd\gamma
= & \int_t^0\int_{G^{-1}(\{\xi\})}\frac{{\rm Tr}_{\xi}|f|^q}{\|\nabla_HG\|_H}d\rho d\xi \notag \\
\leq & |t|\left\|\frac{1}{\|\nabla_HG\|_H}\right\|_{L^\infty(G^{-1}((-\delta,0))}\sup_{s\in(t,0)}\|{\rm Tr}_s|f|^q\|_{L^1(G^{-1}(\{s\}),\rho)} \notag\\
\leq & C'|t|\sup_{s\in(t,0)}\|{\rm Tr}_s|f|^q\|_{L^{p/q}(G^{-1}(\{s\}),\rho)}^{q/p},
\label{stima_norma_lq}
\end{align}
where
\begin{align*}
C':=\left\|\frac{1}{\|\nabla_HG\|_H}\right\|_{L^\infty(G^{-1}((-\delta,0))}(1\wedge \sup_{s\in(t,0)}\rho(G^{-1}(\{s\}))^{(p-1)/p}),    
\end{align*}
and we have used the fact that $\sup_{s\in(-\delta,0)}\rho(G^{-1}(\{s\}))<+\infty$ (see estimate \eqref{stima_rho_livelli_G}).
By replacing estimate \eqref{stima_traccia_q_int} in \eqref{stima_norma_lq}, it follows that 
\begin{align}
\|f\|^q_{L^q(G^{-1}((t,0),\gamma)}
\leq & C_1|t|\sup_{s\in(t,0)}(\|f\|_{L^{p}(G^{-1}((s,0)),\gamma)}^{p-1}\|\nabla_{H}f\|_{L^{p}(G^{-1}((s,0)),\gamma; H)} +\|f\|_{L^{p}(G^{-1}((s,0)),\gamma)}^{p}) \notag \\
\leq & C_1|t|(\|f\|_{L^{p}(G^{-1}((t,0)),\gamma)}^{p-1}\|\nabla_{H}f\|_{L^{p}(G^{-1}((t,0)),\gamma; H)} +\|f\|_{L^{p}(G^{-1}((t,0)),\gamma)}^{p}),
\label{stima_lq_lp}
\end{align}
with $C_1:=C'\widetilde C$. Letting $q\rightarrow p$ in the left-hand side of \eqref{stima_lq_lp} we infer that
\begin{align}
\|f\|^p_{L^p(G^{-1}((t,0),\gamma)}
\leq C_1|t|(\|f\|_{L^{p}(G^{-1}((t,0)),\gamma)}^{p-1}\|\nabla_{H}f\|_{L^{p}(G^{-1}((t,0)),\gamma; H)} +\|f\|_{L^{p}(G^{-1}((t,0)),\gamma)}^{p}),    
\label{stima_lp_lp}    
\end{align}
We set $\delta_0:=\delta\wedge \frac12 C_1^{-1}$. Then, for every $t\in(-\delta_0,0)$ we get
\begin{align*}
\|f\|^p_{L^p(G^{-1}((t,0),\gamma)}
\leq C_1|t|\|f\|_{L^{p}(G^{-1}((t,0)),\gamma)}^{p-1}\|\nabla_{H}f\|_{L^{p}(G^{-1}((t,0)),\gamma; H)} +\frac12\|f\|_{L^{p}(G^{-1}((t,0)),\gamma)}^{p},        
\end{align*}
which gives
\begin{align}
\|f\|^p_{L^p(G^{-1}((t,0),\gamma)}
\leq 2C_1|t|\|f\|_{L^{p}(G^{-1}((t,0)),\gamma)}^{p-1}\|\nabla_{H}f\|_{L^{p}(G^{-1}((t,0)),\gamma; H)}.
\label{stima_lp_lp_seconda}
\end{align}
By dividing both the sides of \eqref{stima_lp_lp_seconda} by $\|f\|_{L^p(G^{-1}((t,0),\gamma)}^{p-1}$ (which we assumed different from $0$), we infer that
\begin{align*}
\|f\|_{L^p(G^{-1}((t,0),\gamma)}
\leq 2C_1|t|\|\nabla_{H}f\|_{L^{p}(G^{-1}((t,0)),\gamma; H)},   
\end{align*}
and the thesis is proved.

\end{proof}

The next result (based on \cite[Prop. 4.10]{Cel}) is the infinite-dimensional
version of a well-known theorem (see e.g. \cite[Thm. 5.5.2]{Eva}).
We recall that the space $W_{0}^{1,p}(O,\gamma)$ was defined in Definition
\ref{def:Sobolev_Dir}, and characterized as the closure of $\mathcal H^1_0(O)$ in $W^{1,p}(O,\gamma)$ in Lemma \ref{lem:dens_H_funct_sob_dir}.
\begin{thm}
\label{thm:approximation:W_0}Let $O=G^{-1}((-\infty,0))$ with $G$ satisfying Hypotheses \ref{claim:regularity},
and let $f\in W^{1,p}(O,\gamma)$ for some $p\in(1,+\infty)$. then, the following are equivalent:

\begin{enumerate}
\item[i)]$f\in W_{0}^{1,p}(O,\gamma)$;

\item[ii)] $\Tr_0 f\equiv0$;

\item[iii)]the trivial extension $\overline{f}$ of $f$ belongs to $ W^{1,p}(X,\gamma)$. 

\end{enumerate}\end{thm}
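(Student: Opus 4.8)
The plan is to establish the cycle $(i)\Rightarrow(ii)$, $(ii)\Leftrightarrow(iii)$ and $(ii)\Rightarrow(i)$, of which only the last implication carries real content. The middle equivalence $(ii)\Leftrightarrow(iii)$ is nothing but Lemma~\ref{lem:Trace_0-1} applied with $t=0$, so it requires no work. For $(i)\Rightarrow(ii)$ I would use continuity of the trace: if $f\in W_0^{1,p}(O,\gamma)$, choose $(g_n)\subseteq{\rm Lip}_c(O)$ with $g_n\to f$ in $W^{1,p}(O,\gamma)$; each $g_n$ vanishes near $\partial O=G^{-1}(\{0\})$, so $\Tr_0 g_n\equiv 0$, and since the trace operator $\Tr_0\colon W^{1,p}(O,\gamma)\to L^{q}(\partial O,\rho)$ is bounded for $q\in[1,p)$ (see \cite{Cel} and Lemma~\ref{lem:ineq_trace}), letting $n\to\infty$ gives $\Tr_0 f\equiv 0$.

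The core of the theorem is $(ii)\Rightarrow(i)$. Assume $\Tr_0 f\equiv 0$; by Lemma~\ref{lem:Trace_0-1} we also know $\overline f\in W^{1,p}(X,\gamma)$. Fix $\chi\in C_b^\infty(\R)$ with $0\le\chi\le1$, $\chi\equiv1$ on $(-\infty,-1]$ and $\chi\equiv0$ on $[-\tfrac12,+\infty)$, and for small $s>0$ set $\eta_s:=\chi(G/s)$. Since $G\in\mathcal H^1(X)\cap{\rm Lip}_H(X)$, the function $\eta_s$ is bounded and $H$-Lipschitz, with $\nabla_H\eta_s=s^{-1}\chi'(G/s)\,\nabla_H G$ supported in the shell $G^{-1}((-s,-\tfrac{s}{2}))$; consequently $\eta_s\overline f\in W^{1,p}(X,\gamma)$ and, by the Leibniz rule, $\nabla_H(\eta_s\overline f)=\eta_s\nabla_H\overline f+\overline f\,\nabla_H\eta_s$. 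I claim that $\eta_s\overline f\to\overline f$ in $W^{1,p}(X,\gamma)$, equivalently $\eta_s f\to f$ in $W^{1,p}(O,\gamma)$, as $s\to0^+$. The contributions $\eta_s\overline f\to\overline f$ in $L^p(X,\gamma)$ and $\eta_s\nabla_H\overline f\to\nabla_H\overline f$ in $L^p(X,\gamma;H)$ follow from dominated convergence, since $0\le\eta_s\le1$, $\eta_s\to1$ pointwise on $O$, and $\overline f$ and $\nabla_H\overline f$ vanish $\gamma$-a.e.\ outside $O$ (recall $\gamma(G^{-1}(\{0\}))=0$ by Remark~\ref{rem:null-boundary}).

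The only genuinely delicate term is $\overline f\,\nabla_H\eta_s$, and controlling it is the main obstacle; this is exactly the reason Lemma~\ref{lem:pseudo_Poinc} was proved. Using $\|\nabla_H G\|_H\le\|\nabla_H G\|_{L^\infty(X;H)}=:L$, one gets
\begin{align*}
\|\overline f\,\nabla_H\eta_s\|_{L^p(X,\gamma;H)}\le\frac{\|\chi'\|_\infty\,L}{s}\,\|f\|_{L^p(G^{-1}((-s,0)),\gamma)},
\end{align*}
and now the pseudo-Poincaré inequality of Lemma~\ref{lem:pseudo_Poinc}, applied with $t=-s$, bounds the right-hand side by $2\|\chi'\|_\infty L\,C_1\,\|\nabla_H f\|_{L^p(G^{-1}((-s,0)),\gamma;H)}$, the dangerous factor $s^{-1}$ being exactly cancelled by the factor $|t|=s$. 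Since $G^{-1}((-s,0))$ shrinks to the $\gamma$-null set $G^{-1}(\{0\})$ as $s\to0^+$ and $\nabla_H f\in L^p(O,\gamma;H)$, dominated convergence forces this to tend to $0$. This is the crucial point: the cutoff gradient blows up like $s^{-1}$, but the null-trace hypothesis, quantified by Lemma~\ref{lem:pseudo_Poinc}, forces $f$ to be of size $o(s)$ in $L^p$ on the shell, which precisely compensates.

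Finally I must check that each approximant $\eta_s f$ belongs to $W_0^{1,p}(O,\gamma)$; here one may assume, after truncating $f$ and multiplying it by a cutoff in the $X$-norm, that $f$ is bounded with bounded support, both reductions preserving the null-trace hypothesis (through Lemma~\ref{lem:Trace_0-1}) and commuting with the limit $s\to0^+$. The function $\eta_s\overline f$ then lies in $W^{1,p}(X,\gamma)$ and is supported in $G^{-1}((-\infty,-\tfrac{s}{2}])$, i.e.\ strictly inside $O$ and away from $\partial O$. Approximating it in $W^{1,p}(X,\gamma)$ by cylindrical functions $v_n\in\mathcal FC_b^\infty(X)$ and multiplying by a test function $F\in\mathcal H_0^1(O)$ produced by Lemma~\ref{lem:spazio_funz_test_prop} that equals $1$ on a neighborhood of the support, the products $Fv_n$ belong to $\mathcal H_0^1(O)$ and converge to $\eta_s f$ in $W^{1,p}(O,\gamma)$; by the characterization $W_0^{1,p}(O,\gamma)=\overline{\mathcal H_0^1(O)}^{\,W^{1,p}(O,\gamma)}$ of Lemma~\ref{lem:dens_H_funct_sob_dir} this yields $\eta_s f\in W_0^{1,p}(O,\gamma)$. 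Letting $s\to0^+$ and invoking the convergence proved above gives $f\in W_0^{1,p}(O,\gamma)$, closing the cycle. The delicate point in this last step, namely reconciling the $G$-level localization of $\eta_s\overline f$ with the positive-$X$-distance support required by $\mathcal H_0^1(O)$, is precisely where the test-function construction of Lemma~\ref{lem:spazio_funz_test_prop} is used.
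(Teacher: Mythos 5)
Your proposal is correct and follows essentially the same route as the paper: the same cutoffs obtained by composing a fixed smooth profile with a rescaled $G$, the same use of Lemma \ref{lem:pseudo_Poinc} to cancel the $s^{-1}$ blow-up of the cutoff gradient on the shell $G^{-1}((-s,0))$, and the same reduction to $W_0^{1,p}(O,\gamma)$ via the characterization of Lemma \ref{lem:dens_H_funct_sob_dir}. The only differences are organizational: the paper proves $i)\Rightarrow iii)$ directly (trivial extensions of the approximating ${\rm Lip}_c(O)$ functions) rather than $i)\Rightarrow ii)$ via trace continuity, and for the converse it runs a single diagonal sequence $g_{n_m}\chi_{m}$ with cylindrical approximants $g_n$ of $\overline f$, which packages your two-step scheme ($\eta_s f\to f$ in $W^{1,p}(O,\gamma)$, then $\eta_s f\in W_0^{1,p}(O,\gamma)$) into one limit and thereby avoids your final truncation/localization paragraph.
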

\begin{proof}
The points ii) and iii) are equivalent by Lemma \ref{lem:Trace_0-1}.

By definition, if $f\in W_{0}^{1,p}(O,\gamma)$, then it is the limit
of a sequence $(f_{n})\subseteq {\rm Lip}_c(O)$; clearly $f_{n}$ can be extended
as $0$ out of $O$, and the sequence of trivial extensions $(\overline{f}_{n})$ converges
to the trivial extension $\overline{f}\in W^{1,p}(X,\gamma)$ of $f$ as $n\rightarrow +\infty$. This gives $i)\Rightarrow iii)$.

\vspace{2mm}
Now We prove $iii)\Rightarrow i)$. 
Let us set
\[
O_{m}:=G^{-1}\left(\left(-\frac{2}{m},\frac{2}{m}\right)\right), \qquad m\in\N.
\]
$(O_m)$ is a sequence of open decreasing sets such that 
\begin{equation}
\bigcap_{m\in\N}O_{m}=G^{-1}(0).\label{eq:intersections}
\end{equation}

Let $\eta\in C^\infty(\R)$ be such that $0\leq\eta\leq1$,
 $\eta=1$ in $(-\infty,-1]$, $\eta=0$ in $[-1/2,+\infty)$, and
$\eta'\leq0$ everywhere. For every $m\in\N$, we
define $\chi_{m}$ as 
\[
\chi_{m}(x)=\begin{cases}
\eta(mG(x)+1), & \mbox{if }x\in O,\\
0, & \mbox{if }x\notin O.
\end{cases}
\]
It follows that $\chi_m\in{\rm Lip}_{c,H}(O)$ for every $m\in\N$, that $\chi_{m}=1$ on $G^{-1}\left((-\infty,-\frac{2}{m}]\right)$
and $\chi_m=0$ on $G^{-1}\left((-\frac{1}{m},+\infty)\right)$. Further,
\begin{equation}
\nabla_{H}\chi_{m}(x)=(m\eta'(mG(x)+1))\nabla_{H}G(x), \qquad \gamma\textup{-a.e. }\ x\in O,
\label{eq:grad_chi}
\end{equation}
$\nabla_{H}\chi_{m|X\backslash O_{m}}=0$ and
\begin{equation}
\|\nabla_{H}\chi_{m}\|_{L^{\infty}(X;H)}\leq Cm,\label{eq:grad_chi_ineq}
\end{equation}
for some positive constant $C$ independent of $m$. Let $f\in W^{1,p}(O,\gamma)$ be such that ${\rm Tr}_0f=0$. We have
\[
\int_{O}|f|^{p}\left\Vert \nabla_{H}\chi_{m}\right\Vert {}_{H}^{p}\ d\gamma\leq C^{p}m^{p}\int_{O_{m}}|f|^{p}\ d\gamma,
\]
and from Lemma \ref{lem:pseudo_Poinc} it follows that
\begin{equation}
\int_{O}|f|^{p}\left\Vert \nabla_{H}\chi_{m}\right\Vert {}_{H}^{p}\ d\gamma\le C_{0}\|f\|_{W^{1,p}(O_{m},\gamma)}^{p},\label{eq:phi_square_grad_chi}
\end{equation}
for some positive constant $C_0$ independent of $m$ and $\varphi$.

Now we prove that $f\in W_{0}^{1,p}(O,\gamma)$,
by finding a sequence $(f_{m})\subseteq\mbox{Lip}_{c,H}(O)$ which converges to $f$ in $W^{1,p}(O,\gamma)$ as $m\rightarrow+\infty$.
We know that there exists a sequence $(g_{n})\subseteq\mathcal{F}C_{b}^{\infty}(X)$ such that $g_{n}\rightarrow \overline f$ in $W^{1,p}(X,\gamma)$ as $n\rightarrow+\infty$.
We fix $n\in\N$. Then, 
\[
\int_{O}|g_{n}\chi_{m}-f|^{p}\ d\gamma\leq2^{p-1}\left(\int_{O}|g_{n}-f|^{p}\ d\gamma+\int_O|f|^p|\chi_m-1|^pd\gamma \right),
\]
and the right-hand side converges to $0$ as $m\rightarrow\infty$, because $\chi_{m}\rightarrow \u_{O}$ in ${L^{r}(X,\gamma)}$ as $m\rightarrow +\infty$ for every $r\in(1,+\infty)$ by the dominated convergence theorem, and $g_{n}\rightarrow f$ in $L^p(O,\gamma)$ as $n\rightarrow +\infty$. Further, for every $n,m\in\N$ we have
\begin{align*}
& \int_{O}\left\Vert \nabla_{H}(g_{n}\chi_{m})-\nabla_{H}f\right\Vert {}_{H}^{p}\ d\gamma \\
= & \int_{O}\left\Vert g_{n}\nabla_{H}\chi_{m}+\chi_{m}\nabla_{H}g_{n}-\nabla_{H}f-f\nabla_{H}\chi_{m}+f\nabla_{H}\chi_{m}-\chi_{m}\nabla_{H}f+\chi_{m}\nabla_{H}f\right\Vert {}_{H}^{p}\ d\gamma \\
\leq & 4^{p-1}\bigg(\int_{O}(g_{n}-f)^{p}\left\Vert \nabla_{H}\chi_{m}\right\Vert {}_{H}^{p}\ d\gamma+\int_{O}\chi_{m}^{p}\left\Vert \nabla_{H}g_{n}-\nabla_{H}f\right\Vert {}_{H}^{p}\ d\gamma \\
& +\int_{O}(\chi_{m}-1)^{p}\left\Vert \nabla_{H}f\right\Vert {}_{H}^{p}\ d\gamma+\int_{O}f^{p}\left\Vert \nabla_{H}\chi_{m}\right\Vert {}_{H}^{p}\ d\gamma\bigg) \\
\leq & 4^{p-1}\bigg(m^{p}\left\Vert g_{n}-f\right\Vert {}_{L^{p}(O_{m},\gamma)}^{p}+\left\Vert \nabla_{H}g_{n}-\nabla_{H}f\right\Vert {}_{L^{p}(O,\gamma,H)}^{p}
+\int_{O_{m}}\left\Vert \nabla_{H}f\right\Vert {}_{H}^{p}d\gamma \\
& +\int_{O}f^{p}\left\Vert \nabla_{H}\chi_{m}\right\Vert {}_{H}^{p}d\gamma\bigg) \\
\leq & 4^{p-1}(m^{p}\left\Vert g_{n}-f\right\Vert {}_{L^{p}(O_{m},\gamma)}^{p}+\left\Vert \nabla_{H}g_{n}-\nabla_{H}f\right\Vert {}_{L^{p}(O,\gamma,H)}^{p}+(1+C_{0})\left\Vert f\right\Vert {}_{W^{1,p}(O_{m},\gamma)}^{p}),
\end{align*}
where in the last inequality we exploit \eqref{eq:phi_square_grad_chi}. By recalling that $g_{n}\rightarrow f$ in $W^{1,p}(X,\gamma)$ for
$n\rightarrow+\infty$, it follows that, for every fixed $m\in\N$, there
exists $n_{m}\geq m$ such that $\|g_{n_m}-f\|_{W^{1,p}(X,\gamma)}^p\leq m^{-p-1}$, which gives
\begin{align}
\int_{O}\left\Vert \nabla_{H}(g_{n_{m}}\chi_{m})-\nabla_{H}f\right\Vert {}_{H}^{p}\ d\gamma\leq 4^{p-1}(m^{-1}+(1+C_{0})\left\Vert f\right\Vert {}_{W^{1,p}(O_{m},\gamma)}^{p}).
\label{teo_fin_ultima_stima}
\end{align}
We recall that $\gamma(G^{-1}(\{0\}))=0$ by Remark \ref{rem:null-boundary}. Hence, from (\ref{eq:intersections}) the last addend in the right-hand side of \eqref{teo_fin_ultima_stima} converges to $0$ as $m\rightarrow\infty$. 
We now define $f_{m}=g_{n_{m}}\chi_{m}$ for every $m\in\N$. Then, $(f_{m})$ converges to $f$
in $W^{1,p}(O,\gamma)$ as $m\rightarrow\infty$, and it is not hard to show that $f_m\in {\rm Lip}_{c,H}(O)$ for every $m\in\N$.
\end{proof}

\section{Examples \label{sec:Examples}}

\subsection{Region below graphics}

As above, we consider
a basis $\{h_{i}\}_{i\in\N}$ in $R_{\gamma}(X^{*})$. In the following, we will denote $\pi_{\hat{h}_1}$  with $\pi_1$. We will define
a function $G$ such that $O=G^{-1}((-\infty,0))$ is the region below the graph of a smooth function.

Let $\Phi$ be a real-valued function on $X$ such that $\partial_{h_{1}}(\Phi)\equiv0$. Hence, for every $x$ we have $\Phi(x)=\Phi(x-\pi_{1}(x))$.
We set
\[
G(x)=\hat{h}_{1}(x)-\Phi(x), \qquad x\in X.
\]
In this case, $O$ is just the region below the graph of $\Phi$. 
We assume that $\Phi$ is continuous and also satisfies the following conditions:
\begin{enumerate}
    \item $\Phi\in {\rm Lip}_{H}(X)$;
    \item $\Phi\in W^{2,p}(X,\gamma)$ for some $p>1$ and $\|D^2_H\Phi\|_{\mathcal L_2(H)}$ is essentially bounded;
    \item $\Phi+L\Phi\in L^\infty(X,\gamma)$.
\end{enumerate}

Under these assumptions, it follows that
\begin{align*}
\nabla_{H}G(x)& =h_{1}-\nabla_{H}\Phi(x-\pi_{1}(x)), \\
D_{H}^{2}G(x) & =-D_{H}^{2}\Phi(x-\pi_{1}(x)), \\
LG(x) & =-\hat{h}_{1}(x)-L\Phi(x-\pi_{1}(x)),
\end{align*}
$\gamma$-a.e. $x\in X$, and $G$ satisfies Hypotheses \ref{claim:regularity}. Indeed, we have
\begin{align*}
LG(x)
= & -\hat h_1(x)- L\Phi(x-\pi_{1}(x))+\Phi(x-\pi_1(x))-\Phi(x-\pi_1(x)) \\
= & (-\hat h(x)+\Phi(x-\pi_1(x)))
-(\Phi(x-\pi_1(x))+L\Phi(x-\pi_1(x))) \\
= & G(x)-(\Phi(x-\pi_1(x))+L\Phi(x-\pi_1(x))),
\end{align*}
$\gamma$-a.e. $x\in X$, and both the addends are bounded on $G^{-1}(I_\delta)$.
If we take for instance $\Phi=c\in\R$, we get that open half-planes satisfy our assumptions.

\subsection{Brownian motion and pinned Brownian motion}

For the following examples we refer to \cite[Section 5]{Dap}. 

We recall (see \cite[Section 2.3]{Bog}) that a Brownian motion starting
from 0 can be modelled by a Wiener space $(X,\gamma_{W})$ where
$X=L^{2}(0,1)$ (with Lebesgue measure), and $\gamma_{W}$ concentrates
on the set of the elements of $L^{2}(0,1)$ which have a continuous
representative $f$ such that $f(0)=0$. The Cameron-Martin space
$H$ is the set of the elements of $L^{2}(0,1)$ which have an absolutely continuous representative $f$ such that $f'\in L^{2}(0,1)$ and $f(0)=0$. Finally, for every $f_{1},f_{2}\in H$ the inner product in $H$ is defined as
$\left\langle f_{1},f_{2}\right\rangle _{H}=\int_{0}^{1}f_{1}'(s)f_{2}'(s)\ ds$.
In the following, for every $h\in H$, we will identify $h$ with
its absolutely continuous representative.

We define an orthonormal basis $\{e_{n}\}_{n\in\N}$ of $L^{2}(0,1)$ as 
\[
e_{n}(s)=\sqrt{2}\sin\Bigl(\frac{s}{\sqrt{\lambda_{n}}}\Bigr)=\sqrt{2}\sin\Bigl(\frac{2n+1}{2}\pi s\Bigr)
\]
where 
\[
\lambda_{n}=\frac{1}{\pi^{2}\left(n+\frac{1}{2}\right)^{2}}.
\]
For every $n\in\N$ we set $h_{n}=\sqrt{\lambda_{n}}e_{n}$. It follows that $\{h_{n}\}_{n\in\N}$ is an orthonormal basis of $H$. 

We consider a function $g\in C^{2}(\R)$, with bounded first and second order derivative, such that there exists $C>0$ such that
\begin{equation}
|g''(\xi)-g''(\eta)|\leq C|\xi-\eta|(|\xi|+|\eta|),\label{eq:example_g_curvature-1}
\end{equation}
for every $\xi,\eta\in\R$. Further, we assume that there exist $\alpha_{1},\alpha_{2},\beta_{1},\beta_{2}\in\R$ such that $|g'(\xi)|\geq a$ (hence $g'(\xi)\neq0$) for every
$\xi\in\R$ and
\begin{equation}
\alpha_{1}g(\xi)+\beta_{1}\leq\xi g'(\xi)\leq\alpha_{2}g(\xi)+\beta_{2}\label{eq:asymptote-1}
\end{equation}
 for every $\xi\in\R$. 

The above assumptions are satisfied, for instance, by the function $g=p/q$, where
$q$ is a positive polynomial of degree $m\in\N$ and $p$ polynomial of
degree $m+1$ such that $p'(\xi)\neq0$ for every $\xi\in\R$.
\begin{prop}
\label{prop:brownian}
Let us assume that $g\in C^2(\R)$ satisfies \eqref{eq:example_g_curvature-1} and \eqref{eq:asymptote-1}, and let $r$ belong to the range
of $g$. We define the function
\[
G(x):=\int_{0}^{1}g(x(s))\ ds-r
\]
for every $x\in X=L^{2}(0,1)$. Then, $G$ satisfies
Hypotheses \ref{claim:regularity}.
\end{prop}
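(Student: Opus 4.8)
The plan is to verify the four conditions of Hypotheses \ref{claim:regularity} for $G(x)=\int_0^1 g(x(s))\,ds-r$, after first computing the relevant $H$-derivatives using the Brownian structure of $(X,\gamma_W,H)$. Differentiating under the integral sign along $h\in H$ gives $\partial_hG(x)=\int_0^1 g'(x(s))h(s)\,ds$; since every $h\in H$ satisfies $h(0)=0$ and $\langle v,h\rangle_H=\int_0^1 v'(s)h'(s)\,ds$, an integration by parts identifies $\nabla_HG(x)$ as the element of $H$ whose derivative is $s\mapsto\int_s^1 g'(x(\tau))\,d\tau$. A second differentiation yields the $H$-Hessian as the bilinear form $D_H^2G(x)[h,k]=\int_0^1 g''(x(s))h(s)k(s)\,ds$, $\gamma$-a.e.\ $x\in X$.

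Points (i) and (iii) are then direct. Lipschitz continuity of $G$ on $X=L^2(0,1)$, hence continuity, follows from $|G(x)-G(y)|\le\|g'\|_\infty\int_0^1|x-y|\,ds\le\|g'\|_\infty\|x-y\|_{L^2(0,1)}$, and the $H$-Lipschitz bound follows from the same estimate together with $\int_0^1|h|\,ds\le\tfrac23\|h\|_H$, a consequence of $|h(s)|\le\sqrt s\,\|h\|_H$. For (iii) I would use $|g'|\ge a>0$: since $g'$ is continuous and never vanishes it has constant sign, so $\big|\int_s^1 g'(x(\tau))\,d\tau\big|\ge a(1-s)$, whence $\|\nabla_HG(x)\|_H^2=\int_0^1\big(\int_s^1 g'(x(\tau))\,d\tau\big)^2\,ds\ge a^2/3$ and $\|\nabla_HG\|_H^{-1}\in L^\infty(X)$.

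For (ii) the essential bound on the Hessian comes from $h_n=\sqrt{\lambda_n}e_n$: fixing an orthonormal basis and applying Parseval in $L^2(0,1)$ to the inner sum, $\|D_H^2G(x)\|_{\mathcal L_2(H)}^2=\sum_{n,m}\lambda_n\lambda_m\big(\int_0^1 g''(x(s))e_n(s)e_m(s)\,ds\big)^2\le(\sup_m\lambda_m)\,\|g''\|_\infty^2\sum_n\lambda_n$, which is finite and independent of $x$ because $\sum_n\lambda_n<+\infty$. To place $G$ itself in $W^{2,p}$ I would observe that the $H$-valued map $\nabla_HG$ is $H$-Lipschitz, $\|\nabla_HG(x+k)-\nabla_HG(x)\|_H\le\tfrac23\|g''\|_\infty\|k\|_H$, and invoke the $H$-valued analogue of \cite[Theorem 5.11.2]{Bog}; combined with $G\in W^{1,p}$ this gives $G\in W^{2,p}(X,\gamma)$, the curvature condition \eqref{eq:example_g_curvature-1} entering to control the approximating Hessians in $L^p(X,\gamma;\mathcal L_2(H))$.

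The heart of the argument, and the main obstacle, is (iv). Writing $LG=\mathrm{div}_\gamma(\nabla_HG)=\sum_i(\partial_{h_i}^2G-\hat h_i\,\partial_{h_i}G)$ and inserting the formulas above, the two resulting series are summed using two structural identities of the Brownian Cameron-Martin space: the reproducing-kernel identity $\sum_i h_i(s)h_i(t)=\min(s,t)$, giving $\sum_i h_i(s)^2=s$, and $\sum_i\hat h_i(x)h_i(s)=x(s)$ ($\gamma$-a.e.). These yield $LG(x)=\int_0^1 s\,g''(x(s))\,ds-\int_0^1 x(s)\,g'(x(s))\,ds$. The first term is bounded by $\tfrac12\|g''\|_\infty$ everywhere; the second is controlled on $G^{-1}(\I)$ by integrating \eqref{eq:asymptote-1} in $s$, namely $\alpha_1(G(x)+r)+\beta_1\le\int_0^1 x(s)g'(x(s))\,ds\le\alpha_2(G(x)+r)+\beta_2$, so that on $G^{-1}(\I)$, where $G(x)+r\in(r-\delta,r+\delta)$, one gets $LG\in L^\infty(G^{-1}(\I))$ for any $\delta>0$; non-emptiness of $O$ and $\partial O$ follows since $|g'|\ge a$ forces $g$ onto $\R$, so the continuous $G$ attains both signs on the connected space $X$ and therefore vanishes somewhere. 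The delicate step is making the computation of $LG$ rigorous: $\mathrm{div}_\gamma(\nabla_HG)$ is defined as the $L^2$-limit of $\mathrm{div}_\gamma(\pi_n\nabla_HG)$, so one must justify the termwise summation and the convergence of both series to the claimed integrals, which is exactly where the growth/continuity control \eqref{eq:example_g_curvature-1} on $g''$ and the summability $\sum_n\lambda_n<+\infty$ are used.
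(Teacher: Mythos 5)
Your proposal is correct and follows essentially the same route as the paper: compute $\nabla_HG$, $D_H^2G$ and $LG$ explicitly, bound the Hilbert--Schmidt norm via $\sum_n\lambda_n<+\infty$, get the lower bound on $\|\nabla_HG\|_H$ from the constant sign of $g'$, and control $\int_0^1 x(s)g'(x(s))\,ds$ on $G^{-1}(I_\delta)$ via \eqref{eq:asymptote-1}. The only (harmless) divergences are cosmetic: you bound $\|\nabla_HG\|_H$ from below by the explicit primitive formula rather than by testing against $\bar h(s)=s$ as the paper does, and you identify $\sum_i h_i(s)^2=s$ via the reproducing kernel where the paper only invokes uniform convergence to some $f\in C([0,1])$.
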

\begin{proof}

It is not hard to show that $G$ is $H$-differentiable. For every $h\in H$ and every $x\in X$ we have
\[
\left\langle \nabla_{H}G(x),h\right\rangle _{H}=\int_{0}^{1}g'(x(s))h(s) ds
\]
and 
\[
\|\nabla_{H}G(x)\|_{H}\leq\sqrt{\int_{0}^{1}|g'(x(s))|^{2} ds}\leq\|g'\|_{\infty}.
\]
Moreover, for every $x,y\in X$, 
\begin{align*}
\|\nabla_{H}G(x)-\nabla_{H}G(y)\|_{H}^{2}
\leq & \int_{0}^{1}|g'(x(s))-g'(y(s))|^{2}ds 
\leq  \int_{0}^{1}\| g''\|_{\infty}^{2}|x(s)-y(s)|^{2} ds \\
\leq & \|g\| _{\infty}^{2}\|x-y\|_{X}^{2},
\end{align*}
from which it follows that $G\in\mathcal H^1(X)$. Further, $D_{H}^{2}G$ is everywhere defined and, for every $h,k\in H$ and every $x\in X$ we get
\[
\left\langle \left(D_{H}^{2}G(x)\right)(h),k\right\rangle _{H}=\int_{0}^{1}g''(x(s))h(s)k(s) ds.
\]
Hence,
\begin{align*}
\|D^2_HG(x)\|_{\mathcal L_2(H)}
= & \sum_{n=1}^\infty\|D^2_HG(x)h_n\|_H^2
\leq\sum_{n=1}^\infty\lambda_n\|g''\|_{\infty}^2<+\infty,
\end{align*}
for every $x\in X$.
Let us consider the function $\bar{h}\in H$ defined by $\bar h(s)=s$ for every $s\in[0,1]$. It follows that $\bh>0$ and $\|\bh\|_{H}=1$. Further, since $g'$ has constant sign (from $|g'|\geq a$), we have 
\begin{align*}
|\left\langle \nabla_{H}G(x),\bh\right\rangle _{H}|
= &\int_{0}^{1}|g'(x(s))|\bh(s)\ ds
\geq a\int_{0}^{1}\bh(s)\ ds
= \frac{a}{2},
\end{align*}
for every $x\in X$, which implies that
\begin{equation}
\|\nabla_{H}G\|_{H}^{-1}\leq \frac{2}{a}.
\label{eq:grad_inverse-1}
\end{equation}

If we consider the sequence $\{h_{k}=\sqrt {\lambda_k}e_k\}_{k\in\N}$, then the
series $\sum_{k=1}^{\infty}h_{k}^{2}$ uniformly converges to a function $f\in C([0,1])$. Moreover, 
\begin{align*}
L G(x)
= & \sum_{i=1}^{\infty}\langle D_{H}^{2}G(x)(h_{i}),h_{i}\rangle_H-\sum_{i=1}^{\infty}\left\langle \nabla_{H}G(x),h_{i}\right\rangle _{H}\widehat{h_i}(x) \\
=& \sum_{i=1}^{\infty}\int_{0}^{1}g''(x(s))h_{i}^{2}(s)ds-\int_{0}^{1}g'(x(s))x(s)ds \\
= & \int_{0}^{1}g''(x(s))f(s)ds-\int_{0}^{1}g'(x(s))x(s)ds.
\end{align*}
The first addend in the last right-hand side of the above chain of equality is bounded because $g''\in C_b(\R)$ and $f\in C([0,1])$. Further, from (\ref{eq:asymptote-1}) we infer that
\[
\int_{0}^{1}g'(x(s))x(s) ds\geq\int_{0}^{1}\left(\alpha_{1}g(x(s))+\beta_{1}\right) ds=\alpha_{1}G(x)-\alpha_{1}r+\beta_{1}
\]
and 
\[
\int_{0}^{1}g'(x(s))x(s)\leq\int_{0}^{1}\left(\alpha_{2}g(x(s))+\beta_{2}\right) ds=\alpha_{2}G(x)-\alpha_{2}r+\beta_{2}.
\]
Therefore, $LG$ is bounded in $G^{-1}((-\delta,\delta))$ for every
$\delta>0$.

Finally, since $r$ belongs to range of $g$ it follows that that $G^{-1}(\{0\})\neq\varnothing$, we conclude that $G$ fulfills Hypotheses \ref{claim:regularity}.
\end{proof}
An analogous example can be provided for pinned Wiener space, which models
Brownian bridge with starting point at $0$ and subject to the condition
that in $1$ the arriving point is $0$. $(X,\tilde{\gamma}_{W})$ where
$X=L^{2}(0,1)$, the Cameron-Martin space is $H=W_{0}^{1,2}(0,1)$.
We recall that $\{e_{n}\}_{n\in\N}$ with $e_n=\sqrt 2\sin(n\pi\cdot )$ 
for every $n\in\N$ is an orthonormal basis of $X$,
and $\{h_n\}_{n\in\N}$, where $h_{n}=\sqrt{2}\pi^{-1}n^{-1}\sin(n\pi\cdot)$ for every $n\in\N$ is an orthonormal basis of $H$.

If we consider a function $g\in C^2(\R)$ which enjoys \eqref{eq:example_g_curvature-1} and \eqref{eq:asymptote-1}, arguing as in the proof of Proposition \ref{prop:brownian} we gain the following result.
\begin{prop}
Given $r$ in the range of
$g$, we define
\[
G(x)=\int_{0}^{1}g(x(s))\ ds-r
\]
for every $x\in X=L^{2}(0,1)$.
Then, the function $G$  satisfies Hypotheses
\ref{claim:regularity}.
\end{prop}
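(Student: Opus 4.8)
The plan is to follow verbatim the scheme of the proof of Proposition~\ref{prop:brownian}; the only structural changes are that the Cameron--Martin space is now $H=W_0^{1,2}(0,1)$ and the orthonormal basis is $\{h_n\}$ with $h_n=\sqrt2\,\pi^{-1}n^{-1}\sin(n\pi\cdot)$. First I would record that $G$ is $H$-differentiable with
\[
\langle\nabla_H G(x),h\rangle_H=\int_0^1 g'(x(s))h(s)\,ds,\qquad\langle(D_H^2G(x))h,k\rangle_H=\int_0^1 g''(x(s))h(s)k(s)\,ds,
\]
for all $h,k\in H$, exactly as in the Brownian case. Continuity of $G$ on $X$ is immediate from $|G(x)-G(y)|\le\|g'\|_\infty\|x-y\|_{L^2(0,1)}$, while testing the first identity against $h=\nabla_H G(x)$ and using Cauchy--Schwarz together with the Poincaré inequality $\|h\|_{L^2(0,1)}\le\pi^{-1}\|h\|_H$ (valid here because every $h\in H$ vanishes at both endpoints) yields the uniform bound $\|\nabla_H G(x)\|_H\le\|g'\|_\infty$. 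Combined with the Lipschitz estimate for $\nabla_H G$ coming from \eqref{eq:example_g_curvature-1}, this gives $G\in\mathcal H^1(X)\cap{\rm Lip}_H(X)$, i.e. Hypothesis~\ref{claim:regularity}(i).

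For Hypothesis~\ref{claim:regularity}(ii) I would estimate the Hilbert--Schmidt norm of $D_H^2G(x)$ by duality: for each $n$,
\[
\|D_H^2G(x)h_n\|_H=\sup_{\|k\|_H\le1}\Big|\int_0^1 g''(x(s))h_n(s)k(s)\,ds\Big|\le\frac{\|g''\|_\infty}{\pi}\,\|h_n\|_{L^2(0,1)},
\]
so that $\|D_H^2G(x)\|_{\mathcal L_2(H)}^2\le\pi^{-2}\|g''\|_\infty^2\sum_{n=1}^\infty\|h_n\|_{L^2(0,1)}^2$. The series converges since $\|h_n\|_{L^2(0,1)}^2=\pi^{-2}n^{-2}$, whence $\sum_n\|h_n\|_{L^2(0,1)}^2=\tfrac16$; the resulting bound is uniform in $x$, so the essential supremum of $\|D_H^2G\|_{\mathcal L_2(H)}$ is finite. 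Together with the linear growth $|G(x)|\le C(1+\|x\|_{L^2(0,1)})$, which places $G$ in every $L^p(X,\gamma)$ by the integrability of Gaussian moments, this yields $G\in W^{2,p}(X,\gamma)$ for every $p>1$.

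The computation of $LG$ for Hypothesis~\ref{claim:regularity}(iv) proceeds as in Proposition~\ref{prop:brownian}: writing $LG(x)=\sum_i\langle D_H^2G(x)h_i,h_i\rangle_H-\sum_i\hat h_i(x)\langle\nabla_H G(x),h_i\rangle_H$ and inserting the formulas above gives
\[
LG(x)=\int_0^1 g''(x(s))f(s)\,ds-\int_0^1 g'(x(s))x(s)\,ds,\qquad f(s):=\sum_{n=1}^\infty h_n(s)^2,
\]
where the series for $f$ converges uniformly by the Weierstrass test and equals $f(s)=s(1-s)$, the covariance of the Brownian bridge, so $f\in C([0,1])$; the identity $\sum_i\hat h_i(x)\langle\nabla_H G(x),h_i\rangle_H=\int_0^1 g'(x(s))x(s)\,ds$ follows, as before, from the expansion $x=\sum_i\hat h_i(x)h_i$ convergent in $X$. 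The first integral is bounded because $g''$ is bounded and $f$ continuous, while integrating \eqref{eq:asymptote-1} bounds $\int_0^1 g'(x(s))x(s)\,ds$ between affine functions of $\int_0^1 g(x(s))\,ds=G(x)+r$; on $G^{-1}(\I)$ this quantity is bounded, so $LG\in L^\infty(G^{-1}(\I))$.

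The one genuinely new point, and the main obstacle, is Hypothesis~\ref{claim:regularity}(iii): the test function $\bh(s)=s$ employed in Proposition~\ref{prop:brownian} is no longer admissible, because it does not vanish at $s=1$ and hence does not belong to $H=W_0^{1,2}(0,1)$. I would instead test against $h_1=\sqrt2\,\pi^{-1}\sin(\pi\cdot)$, which lies in $H$ with $\|h_1\|_H=1$, is nonnegative on $[0,1]$, and satisfies $\int_0^1 h_1(s)\,ds=2\sqrt2\,\pi^{-2}>0$. Since $g'$ has constant sign with $|g'|\ge a$, the Cauchy--Schwarz inequality gives
\[
\|\nabla_H G(x)\|_H\ge|\langle\nabla_H G(x),h_1\rangle_H|=\int_0^1|g'(x(s))|h_1(s)\,ds\ge a\int_0^1 h_1(s)\,ds=\frac{2\sqrt2\,a}{\pi^2},
\]
uniformly in $x\in X$, so that $\|\nabla_H G\|_H^{-1}\in L^\infty(X)$. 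Finally, since $r$ belongs to the range of $g$ the level set $G^{-1}(\{0\})$ is nonempty, so $O$ and $\partial O$ are nonempty, and all the requirements of Hypotheses~\ref{claim:regularity} are verified.
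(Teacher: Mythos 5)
Your proposal is correct and follows exactly the route the paper intends, namely repeating the argument of Proposition~\ref{prop:brownian} in the pinned setting (the paper itself gives no separate proof, saying only ``arguing as in the proof of Proposition~\ref{prop:brownian}''). You have moreover correctly isolated and resolved the one point where that argument genuinely must change --- the test function $\bh(s)=s$ is not in $H=W_0^{1,2}(0,1)$ and must be replaced by a nonnegative element of $H$ such as $h_1=\sqrt{2}\,\pi^{-1}\sin(\pi\cdot)$ to obtain the uniform lower bound on $\|\nabla_HG\|_H$ --- a modification the paper leaves implicit.
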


\end{document}